\def\thesection{\arabic{section}}
\def\theequation{\thesection.\arabic{equation}}
\def\R{\mathbb{R}}
\newcommand{\e}{\epsilon}
\newcommand{\ga} {\gamma}
\newcommand{\Om} {\Omega}
\newcommand{\ra} {\rightarrow}
\newcommand{\De} {\Delta}
\newcommand{\la} {\lambda}
\newcommand{\La} {\Lambda}
\newcommand{\noi} {\noindent}
\newcommand{\uline} {\underline}
\newcommand{\oline} {\overline}
\newcommand{\mb} {\mathbb}
\newcommand{\mc} {\mathcal}
\markboth{\small } {\small A global multiplicity result for singular and critical nonlocal elliptic equation}
\def\theequation{\@arabic{\c@section}.\@arabic{\c@equation}}
\newtheorem{Theorem}{Theorem}[section]
\newtheorem{Lemma}[Theorem]{Lemma}
\newtheorem{Proposition}[Theorem]{Proposition}
\newtheorem{Remark}[Theorem]{Remark}
\newtheorem{Definition}[Theorem]{Definition}
\begin{document}

{\vspace{0.01in}}

\title
{ \sc A Global multiplicity result for a very singular critical nonlocal equation}

\author{J. Giacomoni\footnote{Universit\'e de Pau et des Pays de l'Adour-E2S, CNRS, LMAP (UMR 5142) Bat. IPRA,
  Avenue de l'Universit\'e,
   64013 Pau cedex, France. e-mail:jacques.giacomoni@univ-pau.fr}, ~~ T. Mukherjee\footnote{Department of Mathematics, Indian Institute of Technology Delhi,
Hauz Khaz, New Delhi-110016, India.
 e-mail: tulimukh@gmail.com}~ and ~K. Sreenadh\footnote{Department of Mathematics, Indian Institute of Technology Delhi,
Hauz Khaz, New Delhi-110016, India.
 e-mail: sreenadh@gmail.com} }

\date{}

\maketitle

\begin{abstract}

\noi In this article, we show the global multiplicity result for the following nonlocal singular problem
\begin{equation*}
 (P_\la):\;\quad (-\De)^s u =  u^{-q} + \la u^{{2^*_s}-1}, \quad u>0 \; \text{in}\;
\Om,\quad u = 0 \; \mbox{in}\; \mb R^n \setminus\Om,
\end{equation*}
where  $\Om$ is a bounded domain in $\mb{R}^n$ with smooth boundary $\partial \Om$, $n > 2s,\; s \in (0,1),\; \la >0,\; q>0$ satisfies $q(2s-1)<(2s+1)$ and $2^*_s=\frac{2n}{n-2s}$. Employing the variational method, we show the existence of at least two distinct weak positive solutions for $(P_\la)$ in $X_0$ when $\la \in (0,\La)$ and no solution when $\la>\La$, where $\La>0$ is appropriately chosen. We also prove a result of independent interest that any weak solution to $(P_\lambda)$ is in $C^\alpha(\R^n)$ with $\alpha=\alpha(s,q)\in (0,1)$. The asymptotic behaviour of weak solutions reveals that this result is sharp.
\medskip

\noi \textbf{Key words:} Fractional Laplacian, very singular nonlinearity, variational method, H\"older regularity.

\medskip

\noi \textit{2010 Mathematics Subject Classification:} 35R11, 35R09, 35A15.

\end{abstract}

\section{Introduction}
In this article, we prove the existence, multiplicity and H\"older regularity of weak {solutions} to the following fractional critical and singular elliptic equation
\begin{equation*}
 (P_\la):\;\quad (-\De)^s u =  u^{-q} + \la u^{{2^*_s}-1}, \quad u>0 \; \text{in}\;
\Om,\quad u = 0 \; \mbox{in}\; \mb R^n \setminus\Om,
\end{equation*}
where  $\Om$ is a bounded domain in $\mb{R}^n$ with smooth boundary $\partial \Om$, $n > 2s,\; s \in (0,1),\; \la >0,\; q>0$ satisfies $q(2s-1)<(2s+1)$ and $2^*_s=\frac{2n}{n-2s}$. The fractional Laplace operator  denoted by $(-\De)^s$ is defined as
$$ (-\De)^s u(x) = 2C^n_s\mathrm{P.V.}\int_{\mb R^n} \frac{u(x)-u(y)}{\vert x-y\vert^{n+2s}}\,\mathrm{d}y$$
{where $\mathrm{P.V.}$ denotes the Cauchy principal value and $C^n_s=\pi^{-\frac{n}{2}}2^{2s-1}s\frac{\Gamma(\frac{n+2s}{2})}{\Gamma(1-s)}$, $\Gamma$ being the Gamma function.} The fractional power of Laplacian is the infinitesimal generator of {L\'evy} stable diffusion process and arise in anomalous diffusion in plasma, population dynamics, geophysical fluid dynamics, flames propagation, chemical reactions in liquids and American options in finance, see \cite{da} for instance. The theory of fractional Laplacian and elliptic equations involving it as the principal part has been evolved immensely in recent years. There is a vast literature available on it, however we cite \cite{buccur,BRS} for motivation to readers.

\noi The fractional elliptic equations with singular and critical {nonlinearities} was first studied by Barrios et al. in \cite{peral}. The authors considered the problem
\begin{equation*}
(-\De)^s u = \la\frac{f(x)}{u^\ga} + M u^{p}, \quad
 u>0\;\text{in}\;\Om, \quad
 u = 0 \; \mbox{in}\; \mb R^n \setminus\Om,
\end{equation*}
where  $n>2s$, $M\ge 0$, $0<s<1$, $\ga>0$, $\la>0$, $1<p<2_{s}^{*}-1$ and $f\in L^{m}(\Om)$, $m\geq 1$ is a nonnegative function. Here, authors studied the existence of distributional solutions using the uniform estimates of  $\{u_n\}$ which are solutions of the regularized problems with singular  term $u^{-\ga}$ replaced by $(u+\frac{1}{n})^{-\ga}$. Motivated by their results, Sreenadh and Mukherjee in \cite{TS-ejde} studied the singular problem
\begin{equation*}
 \quad (-\De)^s u = \la a(x)u^{-q} + u^{{2^*_s}-1}, \quad u>0 \; \text{in}\;
\Om,\quad u = 0 \; \mbox{in}\; \mb R^n \setminus\Om,
\end{equation*}
where $\la >0,\; 0 < q \leq 1 $ and $\theta \leq a(x) \in L^{\infty}(\Om)$, for some $\theta>0$. They showed that although the energy functional corresponding {to} this problem fails to be {Fr\'echet} differentiable, making use of its G\^{a}teaux differentiability the Nehari manifold technique can still be benefitted to obtain existence of at least two solutions over a certain range of $\la$. The significance of $q$ being less than $1$ is the G\^{a}teaux differentiability of the functional corresponding to the problem. Whereas if we look at the case $q>1$ then
the  functional
\[J(u)=\frac{C^n_s}{2}\|u\|_{H^{s}_{0}(\Om)}^{2}  -\frac{1}{1-q}\int_\Om |u|^{1-q}~dx-\frac{\la}{2^*_s}\int_\Om |u|^{2^*_s}~dx\]
may not be defined on the whole space nor it is even continuous on $D(I) \equiv \{u \in H^{s}_{0}(\Om) : I(u) < \infty\}$ {and this approach can not be extended}. Besides this, {one has that the} interior of $D(I)=\emptyset$ because of the singular term. But we notice that if we enforce the condition $q>1$ satisfies $q(2s-1)<(2s+1)$ then {we can prove that $D(I)$ is non empty and G\^ateaux
 differentiable on a suitable convex cone of $X_0$.}

 \noi The existence of weak solutions to $(P_\la)$ when $\la \in (0,\La)$ and no solution when $\la>\La$ has been already obtained by Giacomoni et al. in \cite{TJS-ANA}. But here the multiplicity of solutions has been achieved in $L^1_{loc}(\Om)$ only, {by using non smooth critical point theory, so the questions of existence of solutions in the energy space and  of H\"older regularity were still pending. This article is bringing answers to these two issues. For that, we} followed the approach of \cite{haitao} but we notify that the adversity and novelty of this article lies in extending Haitao's technique in a nonlocal framework.  The regularity of weak solution of the purely singular problem
\[(-\De)^s u = u^{-q},\; u>0,\;\text{in}\;\Om,\;\; u=0 \; \text{in}\mb R^n\setminus \Om\]
plays a vital role in our study. This has been obtained by Adimurthi, Giacomoni and Santra in \cite{AJS} in recent times. {In the present paper we extend the H\"older regularity results in our framework of weak solutions (see definition 1.1 below) rather than the more restricted classical solutions framework defined in \cite{AJS}. It requires additional $L^\infty$-estimates and the use of the weak comparison principle.} Nowadays, researchers are inspecting on various forms of singular nonlocal equations. We cite \cite{FP,cai-chu,chen} as some contemporary woks related to it.

\noi Our paper has been organized as follows- Section $2$ contains the function space setting along with some preliminary results. Section $3$ and $4$ contains the proof of existence of first and second weak solution to $(P_\la)$ respectively {(Theorem \ref{maintheorem-gm})}. {The proof of the h\"older regularity result (Theorem \ref{gm-reg2}) is done in Section $4$ based on a priori estimates proved in the Appendix}.

\begin{Definition}
A function $u \in X_0$ is said to be a weak solution of $(P_\la)$ if there exists a $m_K>0$ such that $ u>m_K$ in every compact subset $K$ of $ \Om$,  and it satisfies
\[\int_Q\frac{(u(x)-u(y))(\phi(x)-\phi(y))}{|x-y|^{n+2s}}~dxdy= \int_\Om (u^{-q}+u^{2^*_s-1})\phi~dx,\; \text{for all}\; \phi \in X_0.\]
\end{Definition}
{
Let $\phi_{1,s}$ be the first positive normalized eigenfunction ($\Vert\phi_{1,s}\Vert_{L^\infty(\Omega)}=1$) of $(-\Delta)^s$ in $X_0$.
We recall that $\phi_{1,s}\in C^s(\R^N)$ and $\phi_{1,s}\in C_{\delta^s}^+(\Omega)$ where $\delta(x)= \text{dist}({x,\partial \Om})$ (see for instance Proposition 1.1 and Theorem 1.2 in \cite{Ros-oton-serra-JMPA}). We then define  the function $\phi_q$ as follows:
\begin{eqnarray}\label{weighted-funct}
\phi_q=\displaystyle\left\{\begin{array}{lll}
& \phi_{1,s}\quad\mbox{ if } 0<q<1,\nonumber\\
& \phi_{1,s}\left(\ln\left(\frac{2}{\phi_{1,s}}\right)\right)^{\frac{1}{q+1}}\quad\mbox{ if } q=1,\nonumber\\
& \phi_{1,s}^{\frac{2}{q+1}}\quad\mbox{ if } q>1.
\end{array}\right.
\end{eqnarray}}
We prove the following as the main {results}:
\begin{Theorem}\label{maintheorem-gm}
There exists $\La>0$ such that
\begin{itemize}
 \item[(i) ] $(P_\la)$ admits at least two solutions in {$X_0\cap C^+_{\phi_q}(\Omega)$} for every $\la\in(0,\La)$.
 \item[(ii)]  $(P_\la)$ admits no solution for $\la>\La$.
 \item[(iii)] $(P_\La)$ admits at least one positive solution {$u_\La\in X_0\cap C^+_{\phi_q}(\Omega)$}.
\end{itemize}
 \end{Theorem}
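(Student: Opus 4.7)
The plan is to follow the Ambrosetti--Brezis--Cerami--Haitao scheme, adapted to the nonlocal singular setting. First I would define
\[
\Lambda := \sup\{\lambda>0:\ (P_\lambda)\text{ admits a weak solution in }X_0\cap C^+_{\phi_q}(\Om)\},
\]
and show $0<\Lambda<\infty$. The lower bound $\Lambda>0$ follows by producing, for $\lambda$ small enough, an ordered sub/super-solution pair: a sub-solution $\underline u$ taken to be the unique solution of the purely singular problem $(-\Delta)^s\underline u=\underline u^{-q}$ (whose existence and $C^+_{\phi_q}(\Om)$-regularity are available from Adimurthi--Giacomoni--Santra, and are recalled in the present paper), and a super-solution obtained from the solution $w_\la$ of $(-\De)^sw=\la w^{2^*_s-1}+K$ for suitable $K>0$, or equivalently by perturbing $M\phi_{1,s}$ for $M$ large. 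The finiteness $\Lambda<\infty$ is obtained by testing the equation with $\phi_{1,s}$: since $t^{-q}+\lambda t^{2^*_s-1}\ge c(q)\lambda^{\frac{q}{q+2^*_s-1}}\,t$, any solution would give $\lambda_{1,s}\int u\phi_{1,s}\ge c(q)\lambda^{\frac{q}{q+2^*_s-1}}\int u\phi_{1,s}$, which is impossible for large $\lambda$.

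Next I would prove the monotonicity principle: if $(P_{\lambda_0})$ admits a solution, then $(P_\lambda)$ admits one for every $\lambda\in(0,\lambda_0]$. For this, any solution $u_{\lambda_0}$ is a strict super-solution of $(P_\lambda)$ when $\lambda<\lambda_0$, and $\underline u$ is a sub-solution; a weak comparison principle in the $X_0\cap C^+_{\phi_q}$ setting (which the paper already uses for regularity) combined with a minimization of the energy $J_\la$ over the order interval $[\underline u,u_{\lambda_0}]$ yields a first solution $\underline u_\la$. Equivalently, truncating the nonlinearity above $u_{\lambda_0}$ and below $\underline u$ one obtains an associated $C^1$ functional whose global minimizer gives $\underline u_\la$, and a cut-off argument together with the strict ordering $\underline u_\la<u_{\lambda_0}$ shows that $\underline u_\la$ is in fact a local minimizer of $J_\la$ in the $X_0$-topology (this is the nonlocal analogue of Brezis--Nirenberg's $H^1$ versus $C^1$ local minimizer theorem, used through the H\"older regularity Theorem \ref{gm-reg2} established in the paper).

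For part (i), given $\lambda\in(0,\Lambda)$, pick $\lambda<\lambda'<\Lambda$ so that $(P_{\lambda'})$ has a solution, and construct $\underline u_\la$ as above as the first solution. To get the second solution I would seek a critical point of $J_\la$ of mountain pass type around $\underline u_\la$: write $u=\underline u_\la+v$ with $v\ge0$, so that the shifted functional is well defined, G\^ateaux-differentiable on the convex cone $v\ge0$ (exactly the point where the hypothesis $q(2s-1)<2s+1$ is used), and has the mountain pass geometry thanks to the critical term $u^{2^*_s-1}$. The crucial and most delicate step---the main obstacle---is to show that the mountain pass level stays strictly below the compactness threshold
\[
c^\ast=J_\la(\underline u_\la)+\tfrac{s}{n}S^{n/(2s)},
\]
where $S$ is the best fractional Sobolev constant; this is done by testing the MP path with the Talenti-type bubbles truncated à la Servadei--Valdinoci and using delicate estimates on the singular and subcritical interactions, as in Haitao's scheme. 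Once Palais--Smale compactness is ensured below $c^\ast$, a second solution $\overline u_\la>\underline u_\la$ is produced, and the H\"older regularity of the paper upgrades it to $X_0\cap C^+_{\phi_q}(\Om)$.

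Finally, part (iii) follows by taking $\lambda_n\uparrow\Lambda$ with corresponding minimal solutions $\underline u_{\la_n}$, which form a monotone increasing family in $n$ (by the comparison argument above). Using the sub-solution $\underline u$ as a uniform lower bound, obtaining a uniform $X_0$-bound through testing with $\underline u_{\la_n}$ itself (controlling the critical term via the fact that $\lambda_n$ stays bounded and the lower energy bound), and exploiting the compactness of the subcritical parts together with Brezis--Lieb for the critical term, I would pass to the limit to obtain a weak solution $u_\Lambda\in X_0$. The H\"older regularity result then places $u_\La$ in $C^+_{\phi_q}(\Om)$. Part (ii) is already contained in the proof that $\Lambda<\infty$ together with the monotonicity principle, which forbids any solution for $\la>\La$.
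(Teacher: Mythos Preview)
Your overall strategy coincides with the paper's: define $\Lambda$, show $0<\Lambda<\infty$ via sub/super-solutions and testing with $\phi_{1,s}$, obtain a first solution by minimizing $I_\lambda$ over an order interval $[\underline u_\lambda,u_{\lambda'}]$, prove it is a local minimizer, produce a second solution by a mountain-pass-type argument below the critical threshold, and pass to the limit $\lambda\uparrow\Lambda$ for part (iii). The paper indeed credits Haitao's scheme and uses the Talenti bubble estimate (Lemma~\ref{minimizer-gm}) that you anticipate. A couple of technical points differ, however, and deserve attention.

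First, your route to the $X_0$-local-minimizer property via a nonlocal Brezis--Nirenberg $H^1$ vs $C^1$ result is not what the paper does and is delicate here: because $q\geq 1$, the functional $I_\lambda$ is nowhere $C^1$ on $X_0$ (it is only G\^ateaux differentiable along directions inside a suitable cone, cf.\ Proposition~\ref{gm-prop1}), so the standard $C^1$-vs-energy-space argument does not apply directly. The paper instead proves the local-minimum property by a direct contradiction argument (Lemma~\ref{loc-min-gm}, following \cite{peral-hardy}), decomposing a putative minimizing sequence into its parts above $\overline u$ and below $\underline u$ and estimating the nonlocal cross-terms explicitly.

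Second, for the second solution the paper does not work with a shifted $C^1$ functional and the classical mountain pass; it works on the closed convex set $T=\{u\in X_0:\ u\geq w\}$ and splits into two cases, (ZA) zero altitude and (MA) genuine mountain-pass geometry. In both cases Ekeland's principle on $T$ (respectively on a space of paths into $T$) produces a sequence satisfying a variational \emph{inequality}, and one must pass to the limit through a careful test-function truncation $\phi_\epsilon=(v+\epsilon\phi-w)^-$ to recover the Euler--Lagrange equation. Your outline omits the (ZA) alternative and the variational-inequality mechanics; since $I_\lambda$ is not Fr\'echet differentiable, these are not merely cosmetic. Also, the compactness threshold depends on $\lambda$: it is $I_\lambda(w)+\frac{s}{n}\lambda^{-(n-2s)/(2s)}S_s^{n/(2s)}$, not $I_\lambda(w)+\frac{s}{n}S^{n/(2s)}$.
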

\begin{Theorem}\label{gm-reg2}
Let $\la \in (0,\Lambda]$, $q>0$ satisfies $q(2s-1)<(2s+1)$ and $u \in X_0$ is any positive weak solution of $(P_\la)$ then
\begin{enumerate}
\item[(i)] $u \in C^s(\mb R^n)$ when $0<q<1$;
\item[(ii)] $u \in C^{s-\e}(\mb R^n)$ for any small enough $\e>0$ when $q=1$;
\item[(iii)] {$u \in  C^{\frac{2s}{q+1}}(\mb R^n)$ when $q>1$.}
\end{enumerate}
\end{Theorem}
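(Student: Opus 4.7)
The strategy is to reduce the problem $(P_\lambda)$ to the purely singular equation $(-\De)^s v = v^{-q}$ treated by Adimurthi--Giacomoni--Santra \cite{AJS}, using two ingredients: (a) a global $L^\infty$ estimate on any weak solution $u$, and (b) a two-sided comparison $c_1\phi_q\le u\le c_2\phi_q$ in $\Omega$ that controls both the singular source $u^{-q}$ near $\partial\Omega$ and the behaviour of $u$ itself.

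First I would invoke the a priori $L^\infty$-estimate proved in the Appendix: by Moser iteration on the inequality $(-\De)^s u\le \la u^{2_s^*-1}+u^{-q}$ tested against $\min(u,M)^{2\beta+1}$ (after the singular term is absorbed using $u>m_K$ on compact sets) one gets $\|u\|_{L^\infty(\Om)}\le C$ independent of $\la\in(0,\La]$. Once this is in hand, the term $\la u^{2_s^*-1}$ is bounded; hence, on every compact $K\Subset\Om$ the right-hand side of $(P_\la)$ is bounded, and standard interior regularity for the fractional Laplacian gives $u\in C^{2s-\e}_{loc}(\Om)$, so only the boundary behaviour is nontrivial.

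Next I would establish the two-sided weighted bound $c_1\phi_q\le u\le c_2\phi_q$. The lower bound follows by constructing a subsolution: since $u^{-q}$ dominates near the boundary, one compares $u$ with the solution $\underline u$ of the purely singular problem $(-\De)^s\underline u=\underline u^{-q}$ (existing by \cite{AJS}) multiplied by a small constant, and uses the weak comparison principle valid in $X_0$ for the operator $v\mapsto (-\De)^s v - v^{-q}$ (which is monotone on positive functions). For the upper bound I would build a supersolution of the form $M\bar v$, where $\bar v$ solves $(-\De)^s\bar v=\bar v^{-q}+C$ for a constant $C\ge\la\|u\|_{L^\infty}^{2_s^*-1}$; the results in \cite{AJS} identify the boundary behaviour of $\bar v$ precisely as $\phi_q$. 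Combining these two comparisons yields $c_1\phi_q\le u\le c_2\phi_q$ in $\Omega$, which in turn gives pointwise control $|u^{-q}(x)|\le C\phi_q(x)^{-q}$.

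Having this, I would conclude by quoting the sharp Hölder estimates of \cite{AJS} applied to the linear problem
\begin{equation*}
(-\De)^s w = g\quad\text{in }\Om,\qquad w=0\quad\text{in }\R^n\setminus\Om,
\end{equation*}
with $g(x)=u^{-q}(x)+\la u^{2_s^*-1}(x)$, whose singularity is exactly $\phi_q^{-q}$ near $\partial\Om$: this is the regime for which \cite{AJS} produces $C^s$ regularity up to the boundary when $0<q<1$, $C^{s-\e}$ when $q=1$ (the logarithmic correction costs an arbitrarily small $\e$), and $C^{\frac{2s}{q+1}}$ when $q>1$. Uniqueness of the weak solution of the linear problem with this singular datum, together with the interior regularity, identifies $w$ with $u$ and gives the three claims.

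The main obstacle, in my view, is step two: the weak comparison principle is not immediate in the singular setting because standard energy testing with $(u-v)^+$ requires $u-v\in X_0$ and the singular term $u^{-q}-v^{-q}$ to be integrable when tested against $(u-v)^+$. The condition $q(2s-1)<(2s+1)$ is exactly what is needed to guarantee $\phi_q^{-q}\in L^1(\Om)$ and $\phi_q\in X_0$, so test functions of the form $\min((u-v)^+,\phi_q)$ are admissible and the monotonicity argument goes through. Verifying this carefully, and matching the heuristic boundary behaviour from \cite{AJS} to our weak-solution framework (rather than the classical-solution framework used there), is where most of the technical work lies.
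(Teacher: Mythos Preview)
Your proposal is correct and follows essentially the same route as the paper: establish $u\in L^\infty(\Om)$, sandwich $u$ between the purely singular subsolution $\underline{u_\la}$ and a supersolution $z_\la$ solving $(-\De)^s z_\la = z_\la^{-q}+\la\|u\|_\infty^{2^*_s-1}$ (both lying in $C^+_{\phi_q}(\Om)$), and then invoke Theorem~1.2 of \cite{AJS} together with interior regularity. The only cosmetic difference is in the $L^\infty$ step: rather than Moser iteration on $u$ (where the singular term near $\partial\Om$ is awkward to absorb), the paper passes to $(u-1)^+$, observes $(-\De)^s(u-1)^+\le \chi_{\{u>1\}}(u^{-q}+\la u^{2^*_s-1})\le C(1+((u-1)^+)^{2^*_s-1})$, and quotes the Brezis--Kato type bound from \cite{serv-Linfty}.
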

{\begin{Remark}
Here, the  H\"{o}lder regularity for the weak solutions of $(P_\la)$ obtained is optimal because of the behavior of the solution near $\partial \Om$ since we showed that any weak solution of $(P_\la)$ lies in $C_{\phi_q}^+(\Om)$.
\end{Remark}
\begin{Remark}
 It follows from Theorem \ref{gm-reg2} that the extremal solution (when $\lambda=\Lambda$), in case of critical growth nonlinearities is a classical solution which extends the results in \cite{AJS} where in this regard only subcritical nonlinearities are considered.
\end{Remark}}
\section{Preliminaries}
We start with defining the function spaces. Given any $\phi \in C_0(\overline \Om)$ such that $\phi >0$ in $\Om$ we define
\[{C_{\phi}(\Om):= \{u \in C_0(\overline{\Om})| \;\exists \;c\geq0 \;\text{such that}\; |u(x)|\leq c\phi(x), \; \forall x \in \Om \}}\]
with the usual norm $\displaystyle \left\|\frac{u}{\phi}\right\|_{L^\infty(\Om)}$ and the associated positive cone. We define the following open convex subset of $C_{\phi}(\Om)$ as
\[C_{\phi}^+(\Om):= \left\{ u \in C_{\phi}(\Om)|\; \inf_{x\in \Om}\frac{u(x)}{\phi(x)}>0 \right\}.\]
In particular, $C_{\phi}^+$ contains all those functions $u \in C_0(\Om)$ with $k_1\phi \leq u\leq k_2 \phi$ in $\Om$ for some $k_1,k_2>0$. The space $X$ is defined as
\[X= \left\{u|\;u:\mb R^n \ra\mb R \;\text{is measurable},\;
u|_{\Om} \in L^2(\Om)\;
 \text{and}\;  \frac{(u(x)- u(y))}{ |x-y|^{\frac{n}{2}+s}}\in
L^2(Q)\right\},\]
\noi where $Q=\mb R^{2n}\setminus(\mc C\Om\times \mc C\Om)$ and
 $\mc C\Om := \mb R^n\setminus\Om$  endowed with the norm
 \[\|u\|_X = \|u\|_{L^2(\Om)} + \left[u\right]_X,\]
 where
 \[\left[u\right]_X= \left( \int_{Q}\frac{|u(x)-u(y)|^{2}}{|x-y|^{n+2s}}\,\mathrm{d}x\mathrm{d}y\right)^{\frac12}=\left(\frac{1}{C^n_s}\int_\Omega u(-\Delta)^su\,\mathrm{d}x\mathrm{d}y\right)^{\frac12}.\]
 Then we define $ X_0 = \{u\in X : u = 0 \;\text{a.e. in}\; \mb R^n\setminus \Om\}$ which forms a Hilbert space with the inner product defined as
 \[\langle u ,v\rangle := C^n_s\int_Q \frac{(u(x)-u(y))(v(x)- v(y))}{|x-y|^{n+2s}}~dxdy.\]
The energy functional corresponding to $(P_\la)$ is given by $I_\la :X_0 \to \mb R$ defined as
\[I_\la(u) = \frac{C^n_s\|u\|^2}{2}- \frac{1}{1-q}\int_\Om |u|^{1-q}~dx -\frac{1}{2^*_s}\int_\Om |u|^{2^*_s}~dx.\]
Let {$q>0$ satisfies $q(2s-1)< (2s+1)$. Then for any $\varphi \in X_0$ and $u \in C_{\phi_q}^+(\Om)$, by Hardy's inequality, we obtain}
\begin{equation}\label{gm-11}
\int_{\Om}u^{-q}\varphi \leq \left(\int_\Om \frac{dx}{(\delta(x))^{\frac{2s(q-1)}{(q+1)}}}\right)^{\frac12}\left(\frac{\varphi^2}{(\delta(x))^{2s}}\right)^{\frac12}< K \|\varphi\|<+\infty
\end{equation}
where $K>0$ is a constant. If we define $D(I)= \{u\in X_0:\; I_\la(u)< \infty\}$ then by virtue of \eqref{gm-11} we get that $D(I)\neq \emptyset$. This gives an importance of the inequality $q(2s-1)<(2s+1)$. From { the proof of Theorem 1.2 in \cite{AJS}}, we know that if $0<q<1$ and $u \in X_0$ satisfies $u\geq c\delta^s$ then $I_\la$ is G\^{a}t{e}aux differentiable at $u$. In the preceding lemma, we show the same property of $I_\la$ when $q\geq 1$ satisfies $q(2s-1)<(2s+1)$.
\begin{Proposition}\label{gm-prop1}
If $M=\{u \in X_0:\; u_1\leq u\leq u_2\}$ where $u_1\in C_{\phi_q}^+(\Om)$ and $u_2 \in X_0$ then $I_\la$ is G\^{a}teaux differentiable at $u$ in the direction $(v-u)$ where $v,u \in M$.
\end{Proposition}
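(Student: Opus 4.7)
The plan is to decompose $I_\la$ into three pieces: the quadratic term $\frac{C^n_s}{2}\|\cdot\|^2$, the critical term $-\frac{1}{2^*_s}\int_\Om |\cdot|^{2^*_s}\,dx$, and the singular term $\mathcal{S}(u) := -\frac{1}{1-q}\int_\Om |u|^{1-q}\,dx$. The first two are of class $C^1$ on $X_0$ (the quadratic term by definition of the inner product, the critical term via the continuous embedding $X_0 \hookrightarrow L^{2^*_s}(\Om)$), so each contributes the expected G\^ateaux derivative in the direction $v-u$ with no effort. All the difficulty is in showing that $\mathcal{S}$ admits a one-sided directional derivative at $u$ along $v-u$, and for that I would combine a pointwise mean-value argument with dominated convergence.

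For $t \in (0,1]$ set $u_t := u + t(v-u) = (1-t)u + tv$. Since $M$ is convex, $u_t \in M$, so $u_1 \le u_t \le u_2$; in particular $u_t \ge c\phi_q$ for some $c > 0$ inherited from $u_1 \in C_{\phi_q}^+(\Om)$. For $q \neq 1$, the mean value theorem applied to $\sigma \mapsto \sigma^{1-q}$ on $[c\phi_q(x),\infty)$ gives $\xi_t(x)$ between $u(x)$ and $u_t(x)$ with
$$\frac{u_t(x)^{1-q} - u(x)^{1-q}}{t} = (1-q)\,\xi_t(x)^{-q}\,(v-u)(x), \qquad \xi_t(x) \ge c\phi_q(x).$$
Letting $t \downarrow 0^+$ the right hand side converges a.e.\ to $(1-q)u^{-q}(v-u)$, so the remaining task is to produce an $L^1(\Om)$-dominating function. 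Using $u_1 \le u, v \le u_2$, hence $|v-u| \le u_2$, one obtains
$$\left|\frac{u_t^{1-q} - u^{1-q}}{t}\right| \le |1-q|\,c^{-q}\,\phi_q^{-q}\,u_2$$
uniformly in $t \in (0,1]$.

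The main obstacle, and precisely the reason the hypothesis $q(2s-1) < 2s+1$ is imposed, is verifying $\phi_q^{-q}u_2 \in L^1(\Om)$. For $0 < q < 1$ we have $\phi_q \sim \delta^s$ and the integrability follows easily from the fractional Hardy inequality applied to $u_2 \in X_0$. For $q > 1$ we have $\phi_q \sim \delta^{2s/(q+1)}$, and Cauchy--Schwarz gives
$$\int_\Om \phi_q^{-q}u_2\,dx \le C \Bigl\|\tfrac{u_2}{\delta^s}\Bigr\|_{L^2(\Om)}\Bigl(\int_\Om \delta^{\frac{2s(1-q)}{q+1}}\,dx\Bigr)^{1/2},$$
the first factor finite by fractional Hardy and the second finite exactly when $\frac{2s(q-1)}{q+1} < 1$, i.e.~$q(2s-1) < 2s+1$. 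The borderline case $q=1$ is handled identically, the extra logarithmic factor in the definition of $\phi_q$ being harmless under Cauchy--Schwarz. Once the dominating function is in place, dominated convergence yields
$$\langle I_\la'(u), v-u\rangle = C^n_s\langle u, v-u\rangle - \int_\Om u^{-q}(v-u)\,dx - \int_\Om |u|^{2^*_s-2}u(v-u)\,dx,$$
completing the G\^ateaux differentiability at $u$ in the direction $v-u$.
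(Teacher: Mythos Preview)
Your proposal is correct and follows essentially the same route as the paper: reduce to the singular term, use convexity of $M$ together with the pointwise mean-value theorem to write the incremental quotient as $\xi_t^{-q}(v-u)$ with $\xi_t \ge u_1 \ge c\phi_q$, and then pass to the limit by dominated convergence with the majorant $u_1^{-q}|v-u|$ (equivalently $C\phi_q^{-q}u_2$), whose integrability is precisely the content of the paper's estimate \eqref{gm-11}. Your write-up is in fact more explicit than the paper's, which simply invokes \eqref{gm-11} for the $L^1$ bound without re-deriving the Hardy-type inequality in the proof itself.
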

\begin{proof}
We need to show that
\begin{align*}
\lim_{t \to 0} \frac{I_\la(u+t(v-u))-I_\la(u)}{t} &= C^n_s\int_Q\frac{(v(x)-v(y))((v-u)(x)-(v-u)(y))}{|x-y|^{n+2s}}~dxdy\\
& \quad - \int_\Om u^{-q}(v-u)~dx- \la\int_{\Om}u^{2^*s-1}(v-u)~dx.
\end{align*}
It is enough to show this for the singular term; for the rest two terms, the proof is standard. For any $t \in (0,1)$, $u+t(v-u)\in M$ since $M$ is convex. Consider $F(u)= \displaystyle\frac{1}{1-q}\int_{\Om}u^{1-q}~dx$
then using mean value Theorem we get
\begin{align*}
\frac{F(u+t(v-u))-F(u)}{t}  &= \frac{1}{t(1-q)} \int_\Om \left((u+t(v-u))^{1-q}-u^{1-q}\right)(x)~dx \\
&= \int_\Om (u+t\theta(v-u))^{-q}(x)(v-u)(x)~dx
\end{align*}
for some $\theta \in (0,1)$. Since $(u+t\theta(v-u)) \in M$ {and \eqref{gm-11}}, we have
\[\int_\Om (u+t\theta(v-u))^{-q}(v-u)~dx  \leq \int_\Om u_1^{-q}(v-u)~dx < +\infty.\]
So using Lebesgue Dominated convergence theorem we pass through the limit $t \to 0$ and get
\[\lim_{t\to 0}\frac{F(u+t(v-u))-F(u)}{t} = \int_\Om u^{-q}(v-u)~dx.\]
This completes the proof. {\hfill {$\square$}\goodbreak \medskip}
\end{proof}

\noi Let {$L(u):= (-\De)^su-u^{-q}$} then $L$ forms a monotone operator. So we have the following comparison principle {following} Lemma $3.1$ of \cite{TJS-para}.
\begin{Lemma}\label{comp-princ-gm}
Let {$u_1,u_2 \in X_0\cap C^+_{\phi_q}(\Omega)$} are weak solutions of
\[L(u_1)=g_1\; \text{in}\; \Om,\;\; L(u_2)=g_2\; \text{in}\; \Om \]
with $g_1,g_2 \in L^2(\Om)$ such that $g_1\leq g_2$. Then $u_1\leq u_2$ a.e. in $\Om$. Moreover if $g \in L^\infty(\Om)$ then the problem
\[L(u)=g\; \text{in}\; \Om,\;\; u= 0 \; \text{in}\; \mb R^n\setminus \Om\]
has a unique solution in $X_0$.
\end{Lemma}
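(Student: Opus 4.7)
The approach falls into two parts corresponding to the two assertions. For the comparison claim I would run the classical ``test against $(u_1-u_2)^+$'' monotonicity argument, adapted to the fractional singular setting; the hypothesis $u_1,u_2\in C^+_{\phi_q}(\Om)$ is precisely what makes this work. Since each $u_i$ is bounded below by a positive multiple of $\phi_q$, the Hardy-type bound \eqref{gm-11} ensures $\int_\Om u_i^{-q}\varphi\,dx$ is finite for every $\varphi\in X_0$, so the two weak formulations can be subtracted without trouble. Moreover $(u_1-u_2)^+\in X_0$ (truncation stability) and the standard Stampacchia inequality for the Gagliardo seminorm
\[\int_Q\frac{(w(x)-w(y))\bigl(w^+(x)-w^+(y)\bigr)}{|x-y|^{n+2s}}\,dx\,dy \;\geq\; \|w^+\|^2, \qquad w=u_1-u_2,\]
furnishes the required coercive lower bound on the quadratic form.

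Subtracting the equations and inserting $\varphi=(u_1-u_2)^+$ then yields
\[C^n_s\,\|(u_1-u_2)^+\|^2 \;\leq\; \int_\Om (u_1^{-q}-u_2^{-q})(u_1-u_2)^+\,dx \;+\; \int_\Om (g_1-g_2)(u_1-u_2)^+\,dx.\]
On $\{u_1>u_2\}$ the monotonicity of $t\mapsto -t^{-q}$ makes $u_1^{-q}-u_2^{-q}<0$, so the first integral is nonpositive; the assumption $g_1\leq g_2$ makes the second nonpositive as well. Thus $(u_1-u_2)^+\equiv 0$, i.e., $u_1\leq u_2$ a.e. Uniqueness for the problem $L(u)=g$ is then immediate by applying the comparison with $g_1=g_2=g$ to any two solutions.

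For the existence claim with $g\in L^\infty(\Om)$, the plan is a sub-/supersolution scheme inside $C^+_{\phi_q}(\Om)$. A subsolution $\underline u$ is produced by solving $(-\De)^s\underline u=\underline u^{-q}-\|g\|_{L^\infty}$ (the pure-singular machinery of \cite{AJS}, combined with a small scaling that keeps the right-hand side positive near $\partial\Om$, applies), and a supersolution $\bar u$ by solving $(-\De)^s\bar u=\bar u^{-q}+\|g\|_{L^\infty}$ analogously; both lie in $C^+_{\phi_q}(\Om)$ and satisfy $\underline u\leq \bar u$ by the comparison principle already established. Minimizing
\[J(u)=\frac{C^n_s}{2}\|u\|^2-\frac{1}{1-q}\int_\Om u^{1-q}\,dx-\int_\Om g\,u\,dx\]
on the weakly closed convex order interval $M=\{u\in X_0:\underline u\leq u\leq\bar u\}$, with G\^ateaux differentiability along admissible directions furnished by Proposition \ref{gm-prop1}, yields a minimizer satisfying the variational inequality $\langle J'(u),v-u\rangle\geq 0$ for every $v\in M$. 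The main obstacle — and the delicate step of the proof — is upgrading this variational inequality to the genuine Euler equation tested against an arbitrary $\varphi\in X_0$: for such $\varphi$ the perturbations $u\pm t\varphi$ escape $M$, so one must truncate $\varphi$ at the barriers and pass to the limit, using the uniform pointwise bound $u\geq c\phi_q$ together with \eqref{gm-11} to control the singular integral. This mirrors the scheme of \cite[Lemma 3.1]{TJS-para} transported to the present fractional framework.
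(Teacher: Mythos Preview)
Your proposal is essentially correct. The paper does not prove this lemma in detail: it simply observes that $L$ is monotone and defers to Lemma~3.1 of \cite{TJS-para}. Your comparison argument---test with $(u_1-u_2)^+$, use the Stampacchia inequality and the strict monotonicity of $t\mapsto -t^{-q}$---is exactly the monotone-operator argument alluded to, and indeed the paper reproduces it verbatim in the Appendix (the Claim that $\underline{u_\la}\leq u$). For existence your minimization on the order interval $[\underline u,\bar u]$ is one of the two standard routes; when the paper needs the analogous statement (Lemma~\ref{gm-lem1} and the reference to Theorem~2.4 of \cite{TJS-para}) it instead runs a monotone iteration $L(u_k)=$ right-hand side evaluated at $u_{k-1}$, started from the subsolution. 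The iterative scheme is slightly more direct because it sidesteps the variational-inequality-to-Euler-equation upgrade you correctly flag as the delicate step (that upgrade is carried out in full only later, in Proposition~\ref{gm-prop2}); conversely, your approach is self-contained once Proposition~\ref{gm-prop1} is in hand. One point to tighten: your subsolution $(-\De)^s\underline u=\underline u^{-q}-\|g\|_\infty$ is not obviously solvable. Cleaner is to take $\underline u=\alpha w$ with $w$ the solution of the pure singular problem $(-\De)^sw=w^{-q}$ and $\alpha\in(0,1)$ small, so that $L(\alpha w)=(\alpha-\alpha^{-q})w^{-q}\leq -\|g\|_\infty$ since $w^{-q}\geq (\sup w)^{-q}>0$; the supersolution side is handled symmetrically.
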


\section{Existence result}
 Let us define
\[\La:= \sup\{\la>0:\; (P_\la) \; \text{has a weak solution} \}.\]
Also let $w\in C_0(\overline \Om)$ solves the purely singular problem
\[(-\De)^s w = w^{-q},\;w >0\; \text{in}\; \Om, \; w= 0 \;\text{in}\; \mb R^n \setminus \Om.\]
Then Theorem $1.2$ and Theorem $1.4$ of \cite{AJS} gives us that $w$ is unique, $w \in X_0\cap C_{\phi_q}^+(\Om)$ and $w \in C^{\frac{2s}{q+1}}(\mb R^n)$.
 So we basically focus on the case $q\geq 1$ satisfying $q(2s-1)<(2s+1)$ because when $q \in (0,1)$, the case follows easily along the same line. In this context, next is an important Lemma for $\La$.

\begin{Lemma}\label{gm-lem1}
It holds $0< \La<+\infty$.
\end{Lemma}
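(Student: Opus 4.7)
For $\Lambda>0$, the natural candidate subsolution is the solution $w$ of the purely singular problem, which satisfies $(-\De)^s w = w^{-q} \le w^{-q} + \la w^{2^*_s-1}$ for every $\la>0$. To build a supersolution I would fix a large constant $M>0$ and, by monotone iteration from $w$ combined with Lemma \ref{comp-princ-gm}, solve the auxiliary equation
\[(-\De)^s \bar u = \bar u^{-q} + M \ \text{in } \Om,\qquad \bar u=0 \ \text{in }\R^n\setminus\Om.\]
Comparison with $w$ gives $\bar u\ge w$, while the bounded perturbation $M$ together with the regularity theory developed in \cite{AJS} yields $\bar u\in X_0\cap L^\infty(\Om)\cap C_{\phi_q}^+(\Om)$. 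Then $\bar u$ is a supersolution of $(P_\la)$ whenever $M\ge \la\,\|\bar u\|_{L^\infty(\Om)}^{2^*_s-1}$, i.e.\ for every $\la\in(0,\la_0]$ with $\la_0:= M\,\|\bar u\|_{L^\infty(\Om)}^{-(2^*_s-1)}$. A sub-/super-solution argument on the order interval $[w,\bar u]$ (equivalently, minimization of $I_\la$ over this convex closed subset of $X_0$, where $I_\la$ is G\^ateaux differentiable by Proposition \ref{gm-prop1}) then produces a weak solution of $(P_\la)$ for each such $\la$, whence $\La\ge \la_0>0$.

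For $\La<+\infty$, let $\phi_{1,s}$ be the first positive $L^\infty$-normalized eigenfunction of $(-\De)^s$ on $X_0$, with eigenvalue $\la_1>0$. If $u\in X_0$ is a weak solution of $(P_\la)$, then $\phi_{1,s}\in X_0$ is an admissible test function (all integrals being finite by the definition of weak solution), and testing yields, up to the universal constant $C^n_s$,
\[\la_1\int_\Om u\,\phi_{1,s}\,dx \;=\; C^n_s\int_\Om\bigl(u^{-q}+\la u^{2^*_s-1}\bigr)\phi_{1,s}\,dx.\]
A one-variable calculus lemma, obtained by minimizing $t\mapsto t^{-q-1}+\la t^{2^*_s-2}$ on $(0,\infty)$, delivers the pointwise inequality
\[t^{-q}+\la t^{2^*_s-1}\ \ge\ \kappa\,\la^\ga\,t\qquad\forall\,t>0,\]
with $\ga:=\dfrac{q+1}{2^*_s+q-1}>0$ and $\kappa=\kappa(n,s,q)>0$ independent of $\la$. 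Combining the two displays and using $\phi_{1,s}>0$ together with $\int_\Om u\phi_{1,s}\,dx>0$ gives $\la_1\ge C^n_s\kappa\,\la^\ga$, and hence $\La\le (\la_1/(C^n_s\kappa))^{1/\ga}<+\infty$.

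The main obstacle is the rigorous construction of the supersolution: one must justify solvability of the auxiliary equation $(-\De)^s \bar u=\bar u^{-q}+M$ within the class $X_0\cap C_{\phi_q}^+(\Om)$ together with a global $L^\infty$-bound. Solvability and uniqueness follow from the monotonicity of $L$ in Lemma \ref{comp-princ-gm}, while the $L^\infty$-estimate and the $C_{\phi_q}^+$ boundary behaviour rest on the a priori estimates of \cite{AJS}, adapted to a bounded perturbation of the singular nonlinearity. The upper-bound step is, by contrast, essentially routine once the admissibility of $\phi_{1,s}\in X_0\cap C^s(\R^n)$ as a test function and the above Picone-type pointwise inequality are in hand.
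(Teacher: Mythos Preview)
Your argument is correct and runs parallel to the paper's, with two small departures worth noting.

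For the lower bound $\La>0$, the paper also takes $\underline{u_\la}=w$ as subsolution but builds the supersolution \emph{additively}: it sets $\overline{u_\la}=w+Mz$, where $z\in X_0$ solves $(-\De)^s z=1$ in $\Om$. Since $(w+Mz)^{-q}\le w^{-q}$, the supersolution inequality reduces to $M\ge \la\|w+Mz\|_{L^\infty}^{2^*_s-1}$, which holds for $\la$ small once $M$ is fixed. This avoids invoking any auxiliary existence/regularity theory, because both $w$ and $z$ are already known explicitly to be in $C_{\phi_q}^+(\Om)\cap L^\infty(\Om)$. Your route, solving $(-\De)^s\bar u=\bar u^{-q}+M$, is equally valid---Lemma~\ref{comp-princ-gm} furnishes existence and uniqueness, and the $L^\infty$/$C_{\phi_q}^+$ control indeed comes from \cite{AJS} (this is precisely the function $z_\la$ appearing in the Appendix). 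Your construction makes the supersolution verification trivial ($M\ge \la\|\bar u\|_{L^\infty}^{2^*_s-1}$), at the price of one extra appeal to \cite{AJS}. To extract a solution in the order interval, the paper runs the explicit monotone iteration
\[(-\De)^s u_k - u_k^{-q}=\la u_{k-1}^{2^*_s-1},\qquad u_0=\underline{u_\la},\]
whose well-posedness again rests on Lemma~\ref{comp-princ-gm}; your minimisation alternative is the approach used later in Proposition~\ref{gm-prop2}.

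For the upper bound $\La<+\infty$, both arguments test with $\phi_{1,s}$ and compare $t^{-q}+\la t^{2^*_s-1}$ against a linear function in $t$. The paper stops at the qualitative observation that for $\la$ large one has $t^{-q}+\la t^{2^*_s-1}>2\la_{1,s}t$ for all $t>0$, yielding a contradiction; your version computes the sharp lower bound $\kappa\la^{\gamma}t$ and produces an explicit estimate for $\La$. The stray factor $C^n_s$ in your testing identity reflects a normalisation ambiguity already present in the paper (compare Definition~1.1 with \eqref{gm-1}); it is harmless.
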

\begin{proof}
First we prove that $\La<+\infty$. Using $\phi_{1,s}$ as the test function in $(P_\la)$ we get
\begin{equation}\label{gm-1}
 \int_{\Om} (u^{-q}\phi_{1,s}+ \la u^{2^*_s-1}\phi_{1,s})~dx = \int_{\mb R^n}\phi_{1,s}(-\De)^su~dx=\int_{\mb R^n}u(-\De)^s\phi_{1,s}~dx= \la_{1,s}\int_{\Om}u\phi_{1,s}~dx .
 \end{equation}
If we choose a $\la>0$ which satisfies {$ t^{-q}+\lambda t^{2^*_s-1}> 2 \la_{1,s}t$} for all $t>0$ then we get a contradiction to \eqref{gm-1}. Therefore it must be $\La<+\infty$. Now to prove $\La>0$ we need sub and supersolution for $(P_\la)$. It is easy to see that $\underline{u_\la}= w$ forms a subsolution of $(P_\la)$ and $\overline{u_\la}= \underline{u_\la}+Mz$ {for $\lambda>0$ small enough and for a $M=M(\lambda)>0$} forms a supersolution of $(P_\la)$, where $0<z\in X_0$ solves $(-\De)^sz=1$ in $\Om$. Now we define the closed convex subset $M_\la$ of $X_0$ as
\[M_\la := \{u \in X_0:\; \uline{u_\la}\leq u\leq \oline{u_\la}\}.\]
Consider the iterative scheme $(k\geq 1)$:
{\begin{equation*}
(P_{\la,k})\left\{
\begin{split}
(-\De)^su_k  -u_k^{-q}&= \la u_{k-1}^{2^*_s-1},\; u_k>0\; \text{in}\l \Om\\
u_k &= 0 \; \text{in} \; \mb R^n \setminus \Om
\end{split}
\right.
\end{equation*}}
with $u_0= \underline{u_\la}$. The existence of $\{u_k\}$ in $X_0\cap M_\la \cap C_{\phi_q}^+(\Om)$ can be proved by considering the approximated problem corresponding to $(P_{\la,k})$, for instance {we} refer Theorem $2.4$ of \cite{TJS-para}. From Lemma \ref{comp-princ-gm}, it follows that $\{u_k\}$ is increasing and $u_k \in M_\la$ for all $k$. Let $\lim\limits_{k \uparrow\infty}u_k = u_\la$. Then testing $(P_{\la,k})$ by $u_k$ we get
\[\|u_k\|^2 \leq 2\int_\Om \oline{u_\la}^2dx + \la\int_\Om \oline{u_\la}^{2^*_s}dx + \int_{\Om}\oline{u_\la}\uline{u_\la}^{-q} \leq K_\la  \]
where $K_\la>0$ is a constant depending on $\la$. So, up to a subsequence, $u_k \rightharpoonup u_{\la}$ in $X_0$. Finally using Lebesgue dominated convergence Theorem we pass through the limit in $(P_{\la,k})$ to obtain $u_\la$ solves $(P_\la)$ weakly and obviously, $u_\la \in M_\la$. This proves that $\La>0$.{\hfill {$\square$}\goodbreak \medskip}
\end{proof}

\noi Now, we prove the existence of a weak solution for $(P_\la)$ whenever $\la \in (0,\La)$.
\begin{Proposition}\label{gm-prop2}
For each $\la \in (0,\La)$, $(P_\la)$ admits a weak solution $w \in C_{\phi_q}^+(\Om)$.
\end{Proposition}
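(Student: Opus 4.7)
The plan is to produce the weak solution by a monotone iteration on an order interval $[w,u_\mu]$, where $u_\mu$ is a weak solution of $(P_\mu)$ at some parameter $\mu$ with $\la<\mu<\La$. Unlike the proof of Lemma~\ref{gm-lem1}, one cannot construct an explicit supersolution of the form $w+Mz$ for arbitrary $\la\in(0,\La)$, so the supersolution must instead be borrowed from the very definition of $\La$ as a supremum: for any $\la<\La$ there exists $\mu\in(\la,\La)$ such that $(P_\mu)$ admits a weak solution $u_\mu\in X_0$, and $\mu>\la$ together with $u_\mu>0$ immediately yields
\[
(-\De)^s u_\mu=u_\mu^{-q}+\mu u_\mu^{2^*_s-1}\ \geq\ u_\mu^{-q}+\la u_\mu^{2^*_s-1},
\]
so $u_\mu$ is a (weak) supersolution of $(P_\la)$. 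The function $w$ is evidently a subsolution.

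Before iterating, one must verify $w\leq u_\mu$ a.e. in $\Om$. Since $u_\mu$ is a pointwise supersolution of the purely singular problem $(-\De)^s u=u^{-q}$ and $w$ is its unique solution in $X_0\cap C^+_{\phi_q}(\Om)$ (by \cite{AJS}), this can be obtained by testing the inequality $(-\De)^s(w-u_\mu)\leq w^{-q}-u_\mu^{-q}$ against $(w-u_\mu)^+\in X_0$ and exploiting the strict monotonicity of $t\mapsto-t^{-q}$; the singular integral on the right-hand side is controlled by \eqref{gm-11} since $w\in C^+_{\phi_q}(\Om)$ and $(w-u_\mu)^+=0$ where $u_\mu$ could vanish.

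Next I would run the iterative scheme exactly as in Lemma~\ref{gm-lem1}: set $u_0=w$ and for $k\geq 1$ let $u_k\in X_0\cap C^+_{\phi_q}(\Om)$ be the unique solution of
\[
L(u_k)=(-\De)^s u_k-u_k^{-q}=\la u_{k-1}^{2^*_s-1}\quad\text{in }\Om,\qquad u_k=0\quad\text{in }\R^n\setminus\Om,
\]
whose existence is granted by the approximation argument cited from \cite{TJS-para}. An induction based on the comparison principle of Lemma~\ref{comp-princ-gm} gives simultaneously the monotonicity $u_{k-1}\leq u_k$ and the upper bound $u_k\leq u_\mu$: indeed $L(u_k)=\la u_{k-1}^{2^*_s-1}\geq \la u_{k-2}^{2^*_s-1}=L(u_{k-1})$ on the one hand, and $L(u_k)=\la u_{k-1}^{2^*_s-1}\leq \mu u_\mu^{2^*_s-1}=L(u_\mu)$ on the other. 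Testing the equation for $u_k$ by $u_k$ and using $w\leq u_k\leq u_\mu$ gives a uniform bound on $\|u_k\|_{X_0}$, so up to a subsequence $u_k\rightharpoonup u_\la$ in $X_0$ and $u_k\uparrow u_\la$ a.e. Passing to the limit in the weak formulation via Lebesgue's dominated convergence (the singular term is dominated by $w^{-q}\varphi$, integrable by \eqref{gm-11}, and the critical term by $u_\mu^{2^*_s-1}\varphi$) yields that $u_\la$ solves $(P_\la)$ with $w\leq u_\la\leq u_\mu$. The membership $u_\la\in C^+_{\phi_q}(\Om)$ then follows from $u_\la\geq w\geq c\phi_q$ combined with the H\"older/$\phi_q$-bound from the regularity Theorem~\ref{gm-reg2}.

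The main obstacle I anticipate is the verification $w\leq u_\mu$: Lemma~\ref{comp-princ-gm} requires both functions to lie in $X_0\cap C^+_{\phi_q}(\Om)$, but at this stage the solution $u_\mu$ is only known to be a weak solution in the sense of Definition~1.1, i.e.\ strictly positive on compact subsets but not yet known to decay like $\phi_q$ at $\partial\Om$. Circumventing this requires either a direct test-function argument (as indicated above) leveraging the monotonicity of the singular nonlinearity, or else bootstrapping the regularity of $u_\mu$ using the $L^\infty$ estimates of the Appendix before invoking Lemma~\ref{comp-princ-gm}. Once this comparison is in hand, the rest of the proof is a rather standard combination of the Perron-type iteration and dominated convergence.
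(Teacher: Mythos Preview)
Your monotone-iteration approach is correct and essentially replicates the scheme of Lemma~\ref{gm-lem1} with $u_\mu$ in place of the explicit supersolution, but the paper proceeds quite differently: it minimizes $I_\la$ directly over the order interval $W_\la=\{u\in X_0:\underline{u_\la}\leq u\leq u_{\la'}\}$ and then runs a variational-inequality argument with truncated test functions $v_\e=\min\{u_{\la'},\max\{\underline{u_\la},w+\e\varphi\}\}$, carefully estimating the nonlocal cross terms to show the constrained minimizer solves the equation. Your route is more elementary in that it avoids those truncation estimates entirely, but the paper's choice is not incidental: the very next result, Lemma~\ref{loc-min-gm}, relies on the fact that the solution produced here is the \emph{infimum of $I_\la$ on $W_\la$} (see \eqref{gm-3}) in order to conclude that $w$ is a local minimizer of $I_\la$ on all of $X_0$, which is the launching point for the mountain-pass construction of the second solution. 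Your iteration delivers the order-minimal solution, which is not automatically the energy minimizer on $W_\la$, so Lemma~\ref{loc-min-gm} would require a separate argument afterwards. On the technical obstacle you flagged, the $L^\infty$ bound of Proposition~\ref{reg-gm1} and the subsequent $C^+_{\phi_q}$ regularity are established independently of Proposition~\ref{gm-prop2}, so bootstrapping $u_\mu$ is legitimate and not circular; note you need $u_\mu\in L^\infty$ anyway, since $\la u_{k-1}^{2^*_s-1}\in L^\infty(\Om)$ is precisely the hypothesis under which Lemma~\ref{comp-princ-gm} furnishes each iterate $u_k$.
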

\begin{proof}
The proof goes along the line of Perron's method adapted over a nonlocal framework (see Lemma $2.2$ of \cite{haitao}). Let $\la \in (0,\La)$ and $\la^\prime \in (\la,\La)$ then it is easy to see that $u_{\la^\prime}$, a weak solution of $(P_{\la^\prime})$, forms a supersolution for $(P_{\la})$. Such a $\la^\prime$ exists because of the definition of $\La$ and Lemma \ref{gm-lem1}. Let $\uline{u_\la}$ be the same function as defined in Lemma \ref{gm-lem1} and consider the closed convex subset $W_\la$ of $X_0$ as
\[W_{\la}= \{u \in X_0:\; \uline{u_\la} \leq u\leq u_{\la^\prime}. \}\]
Then for each $u \in W_\la$, because of fractional Sobolev embedding $I_\la$ satisfies
\[I_\la(u) \geq \frac{C^n_s\|u\|^2}{2}-\frac{C}{2^*_s}\|u\|^{2^*_s}  \]
which implies that $I_\la$ is bounded from below and coercive over $W_\la$. If $\{u_k\} \subset W_\la$ be such that $u_k \rightharpoonup u_0$ in $X_0$ as $k \to \infty$ then since for each $k$, $u_k \geq \uline{u_\la}$ for $q>1$ and $u_k \leq {u_{\la^\prime}}$ for $q \in (0,1]$, $\textstyle \int_\Om u_k^{1-q}~dx \leq \int_\Om \uline{u_\la}^{1-q}~dx$, we can use Lebesgue Dominated convergence theorem to get that
\begin{equation*}\label{gm-2}
 \int_\Om u_k^{1-q}~dx \to \int_\Om u_0^{1-q}~dx\; \text{as}\; k \to \infty.
\end{equation*}
 Hence from weak lower semicontinuity of norms, it follows that $I_\la$ is weakly lower semicontinuous over $W_\la$. Moreover, $W_\la$ is weakly sequentially closed subset of $X_0$. Therefore there exists a $w \in W_\la$ such that
 \begin{equation}\label{gm-3}
 \inf_{u \in W_\la} I_\la(u) = I_\la(w).
 \end{equation}
 \noi \textbf{Claim-} $w$ is a weak solution of $(P_\la)$.\\
 Let $\varphi \in X_0$ and $\e>0$ then we define
 \[v_\e = \min\{u_{\la^\prime}, \max\{\uline{u_\la}, w+\e \varphi\} \} = w+\e \varphi - \varphi^\e+ \varphi_\e\]
 where $\varphi^\e = \max\{0, w+\e\varphi- u_{\la^\prime}\}$ and $\varphi_\e=\max\{0, \uline{u_\la}-w-\e\varphi\}$. By construction $v_\e \in W_\la$ and $\varphi^\e, \varphi_\e \in X_0 \cap L^\infty(\Om)$. Since $w+t(v_\e-w) \in W_\la$ for each $t\in(0,1)$, using \eqref{gm-3} and Proposition \ref{gm-prop1} we get that
 \begin{align*}
 0 &\leq \lim_{t\to 0^+} \frac{I_\la(w+t(v_\e-w))- I_\la(w)}{t} \\
  & = \int_Q (v_\e-w)(-\De)^sw ~dx- \int_{\Om}w^{-q}(v_\e-w)dx- \int_{\Om}w^{2^*_s-1}(v_\e-w)dx.
 \end{align*}
 This on simplification gives
 \begin{equation}\label{gm-4}
 \int_{\mb R^n}\varphi(-\De)^sw~dx {-} \int_{\Om}(w^{-q}+\la w^{2^*s-1})\varphi~dx \geq \frac{1}{\e} (E^\e-E_\e)
 \end{equation}
 where
 \begin{align*}
 E^\e &= \int_{\mb R^n}\varphi^\e(-\De)^sw dx- \int_{\Om}(w^{-q}+\la w^{2^*_s-1})\varphi^\e~dx\\
 &= \int_{\mb R^n}\varphi^\e(-\De)^s(w-u_{\la^\prime})dx + \int_{\mb R^n}\varphi^\e(-\De)^su_{\la^\prime}dx - \int_{\Om}(w^{-q}+\la w^{2^*_s-1})\varphi^\e~dx \\
 E_\e &= \int_{\mb R^n}\varphi_\e(-\De)^sw dx- \int_{\Om}(w^{-q}+\la w^{2^*_s-1})\varphi_\e~dx\\
 & =\int_{\mb R^n}\varphi_\e(-\De)^s(w-\uline{u_{\la}})dx + \int_{\mb R^n}\varphi_\e(-\De)^s\uline{u_{\la}}dx - \int_{\Om}(w^{-q}+\la w^{2^*_s-1})\varphi_\e~dx.
 \end{align*}
 We define $\Om^\e = \{x \in \Om :\; (w+\e\varphi)(x) \geq u_{\la^\prime}> w(x)\}$ so that $\mc L(\Om^\e) \to 0$ as $\e \to 0^+$ and also $\mc C \Om^\e {:=\Om\setminus\Om_\e} \subset \{x \in \Om:\; (w+\e \varphi)(x)< u_{\la^\prime}(x)\}$ which implies that $\mc L(\Om^\e \times \mc C\Om^\e) \to 0$ as $\e \to 0^+$. Now we consider the term
 \begin{align*}
&\int_{\mb R^n}\varphi^\e(-\De)^s(w-u_{\la^\prime})dx\\ &= \int_Q \frac{((w-u_{\la^\prime})(x)-(w-u_{\la^\prime})(y))(\varphi_\e(x)-\varphi_\e(y))}{|x-y|^{n+2s}}dxdy \\
& = \int_{\Om^\e}\int_{\Om^\e}\frac{|(w-u_{\la^\prime})(x)-(w-u_{\la^\prime})(y)|^2}{|x-y|^{n+2s}}dxdy\\
 &\quad + \e \int_{\Om^\e}\int_{\Om^\e}\frac{((w-u_{\la^\prime})(x)-(w-u_{\la^\prime})(y))(\varphi(x)-\varphi(y))}{|x-y|^{n+2s}}dxdy\\
 & \quad+2 \int_{\Om^\e}\int_{\mc C\Om^\e} \frac{(w-u_{\la^\prime})^2(x)}{|x-y|^{n+2s}}dxdy + 2\e\int_{\Om^\e}\int_{\mc C\Om^\e}\frac{(w-u_{\la^\prime})(x)\varphi(x)}{|x-y|^{n+2s}}dxdy\\
 & \quad - 2\int_{\Om^\e}\int_{\mc C\Om^\e}\frac{(w-u_{\la^\prime})(x)(w-u_{\la^\prime})(y)}{|x-y|^{n+2s}}dxdy +2\e \int_{\Om^\e}\int_{\mc C\Om^\e}\frac{(w-u_{\la^\prime})(y)\varphi(x)}{|x-y|^{n+2s}}dxdy\\
 & \quad +2 \int_{\Om^\e}\int_{\mc C\Om} \frac{(w-u_{\la^\prime})^2(x)}{|x-y|^{n+2s}}dxdy + 2\e\int_{\Om^\e}\int_{\mc C\Om}\frac{(w-u_{\la^\prime})(x)\varphi(x)}{|x-y|^{n+2s}}dxdy\\
 &\geq  \e \int_{\Om^\e}\int_{\Om^\e} \frac{((w-u_{\la^\prime})(x)-(w-u_{\la^\prime})(y))(\varphi(x)-\varphi(y))}{|x-y|^{n+2s}}dxdy \\
 & \quad  + 2\e\int_{\Om^\e}\int_{\mc C\Om^\e}\frac{(w-u_{\la^\prime})(x)\varphi(x)}{|x-y|^{n+2s}}dxdy- 2\e^2\int_{\Om^\e}\int_{\mc C\Om^\e}\frac{\varphi(x)\varphi(y)}{|x-y|^{n+2s}}dxdy \\
 & \quad +2\e \int_{\Om^\e}\int_{\mc C\Om^\e}\frac{(w-u_{\la^\prime})(y)\varphi(x)}{|x-y|^{n+2s}}dxdy + 2\e\int_{\Om^\e}\int_{\mc C\Om}\frac{(w-u_{\la^\prime})(x)\varphi(x)}{|x-y|^{n+2s}}dxdy\\
 \end{align*}
where to obtain the last inequality, we use the fact that if $(x,y) \in \Om^\e\times\mc C\Om^\e$ then $(w-u_{\la^\prime})(x)(w-u_{\la^\prime})(y)\leq \e^2\varphi(x)\varphi(y)$.
Therefore we get
\[\frac{1}{\e}\int_{\mb R^n}\varphi^\e(-\De)^s(w-u_{\la^\prime})dx \geq o(1) \; \text{as}\; \e \to 0^+.\]
Moreover using the fact that ${u_{\la^\prime}}$ is a supersolution of $(P_\la)$, the other terms of $\frac{1}{\e}E^\e$ can be estimated as
\begin{align*}
&\frac{1}{\e} \int_{\mb R^n}\varphi^\e(-\De)^su_{\la^\prime}~dx - \frac{1}{\e} \int_{\Om}(w^{-q}+\la w^{2^*s-1})\varphi^\e~dx \\
&\geq \frac{1}{\e}\int_{\Om^\e} (u_{\la^\prime}^{-q}-w^{-q})\varphi^\e~dx +  \frac{1}{\e}\int_{\Om^\e} (u_{\la^\prime}^{2^*_s-1}-w^{2^*_s-1})\varphi^\e ~dx\\
& \geq -\int_{\Om^\e} |u_{\la^\prime}^{-q}-w^{-q}||\varphi|dx =o(1)\; \text{as}\; \e \to 0^+.
\end{align*}
Altogether we get
\[\frac{1}{\e}E^\e \geq o(1) \; \text{as}\; \e \to 0^+\]
and similarly we obtain
\[\frac{1}{\e}E_\e \leq o(1) \; \text{as}\; \e \to 0^+.\]
Hence \eqref{gm-4} gives that for all $\varphi \in {X_0}$
\[ \int_{\mb R^n}\varphi(-\De)^sw~dx- \int_{\Om}(w^{-q}+\la w^{2^*_s-1})\varphi~dx \geq o(1)\; \text{as}\; \e \to 0^+\]
but since $\varphi$ was arbitrary, this implies that $w$ is a weak solution of $(P_\la)$. This establishes the proof. {\hfill {$\square$}\goodbreak \medskip}
\end{proof}

\noi We now prove a special property of $w$, the weak solution of $(P_\la)$ obtained in Proposition \ref{gm-prop2} following the proof of Proposition $3.5$ of \cite{peral-hardy}.
\begin{Lemma}\label{loc-min-gm}
Let $\la \in (0,\La)$ and $w$ denotes the weak solution of $(P_\la)$ obtained in Proposition \ref{gm-prop2}. Then $w$ forms a local minimum of the functional $I_\la$.
\end{Lemma}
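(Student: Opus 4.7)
The plan is to adapt the Brezis--Nirenberg strategy, in the form used in Proposition $3.5$ of \cite{peral-hardy}, to the present nonlocal singular setting: first show that $w$ is a local minimum of $I_\la$ in the topology of $C_{\phi_q}^+(\Om)$, then upgrade this to a local minimum in the $X_0$-topology by a contradiction argument based on constrained minimization in $X_0$ combined with the regularity theory of the Appendix.

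For the first step I would show that $w$ lies strictly inside $W_\la$ in the $C_{\phi_q}^+$ sense, i.e.\ that $w-\underline{u_\la}$ and $u_{\la'}-w$ both belong to $C_{\phi_q}^+(\Om)$. The starting point is the strict inequalities
\[
L(w)-L(\underline{u_\la})=\la w^{2^*_s-1}>0 \quad\text{and}\quad L(u_{\la'})-L(w)\geq(\la'-\la)u_{\la'}^{2^*_s-1}>0 \text{ in } \Om,
\]
with $L(u)=(-\De)^su-u^{-q}$; the first uses $L(\underline{u_\la})=0$, the second uses $L(u_{\la'})=\la'u_{\la'}^{2^*_s-1}$ together with $w\leq u_{\la'}$. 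Combining the comparison principle of Lemma \ref{comp-princ-gm} with a Hopf-type lower estimate for $L$ in the spirit of \cite{AJS} yields constants $c_1,c_2>0$ such that $w-\underline{u_\la}\geq c_1\phi_q$ and $u_{\la'}-w\geq c_2\phi_q$ in $\Om$. Hence any $v\in X_0\cap C_{\phi_q}(\Om)$ sufficiently close to $w$ in the $C_{\phi_q}$-norm automatically lies in $W_\la$, and the minimality identity \eqref{gm-3} upgrades immediately to $C_{\phi_q}^+$-local minimality of $w$.

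For the second step I argue by contradiction. Suppose $\{v_n\}\subset X_0$ satisfies $v_n\to w$ in $X_0$ and $I_\la(v_n)<I_\la(w)$. Choose $\rho_n\downarrow 0$ with $v_n\in\overline{B}_{\rho_n}(w)\subset X_0$ and pick constrained minimizers $v_n^*$ of $I_\la$ on $\overline{B}_{\rho_n}(w)$, still satisfying $I_\la(v_n^*)<I_\la(w)$. The G\^{a}teaux differentiability established in Proposition \ref{gm-prop1}, applied via a Lagrange multiplier argument, yields
\[
(-\De)^sv_n^*=(v_n^*)^{-q}+\la(v_n^*)^{2^*_s-1}+\mu_n(v_n^*-w) \text{ in } \Om
\]
for some $\mu_n\leq 0$. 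Uniform two-sided barriers $c\phi_q\leq v_n^*\leq C\phi_q$, obtained from $\underline{u_\la}$ and $u_{\la'}$ via Lemma \ref{comp-princ-gm} applied to the perturbed operator, combined with the $L^\infty$ and H\"older estimates of the Appendix, force $v_n^*\to w$ in $C_{\phi_q}^+(\Om)$, contradicting the $C_{\phi_q}^+$-local minimality from the first step.

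The main obstacle is the uniform regularity analysis of the constrained minimizers. Because the Lagrange multiplier $\mu_n\leq 0$ is a priori unbounded, the barriers for the perturbed equation and the attendant H\"older bounds must all be constructed independently of $\mu_n$; in particular, the uniform lower bound $v_n^*\geq c\phi_q$, indispensable both to tame the singular right-hand side $(v_n^*)^{-q}$ and to justify the use of Proposition \ref{gm-prop1} in the multiplier rule, requires a careful subsolution argument before the a priori estimates of the Appendix can be invoked. Once these two-sided bounds are available, the H\"older compactness from the Appendix delivers the desired $C_{\phi_q}^+$-convergence and closes the contradiction.
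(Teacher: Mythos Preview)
Your proposal follows the classical Brezis--Nirenberg $H^1$-versus-$C^1$ paradigm, but in this very singular setting it runs into a genuine circularity that you flag in your last paragraph without resolving. To write the Euler--Lagrange equation for the constrained minimizer $v_n^*$ you need G\^ateaux differentiability of $I_\la$ at $v_n^*$, and by Proposition~\ref{gm-prop1} this already presupposes a lower barrier $v_n^*\geq c\,\phi_q$. But $v_n^*$ is only a minimizer over an $X_0$-ball, and nothing prevents it from vanishing (or changing sign) on a set of positive measure; for $q>1$ the functional is then $+\infty$ there and you do not even know that $v_n^*$ lies where $I_\la$ is finite, let alone differentiable. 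Without the equation there is no subsolution argument producing the barrier, so the loop does not close. Even granting the barrier, the Appendix supplies $L^\infty$ and H\"older bounds for weak solutions of $(P_\la)$, not for the perturbed equation with an unbounded multiplier $\mu_n$ and critical right-hand side; regularity uniform in $n$ in that situation is a separate, nontrivial project.

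The paper sidesteps all of this with a direct truncation argument that never leaves $X_0$ and never invokes H\"older regularity. Given $u_k\to w$ in $X_0$ with $I_\la(u_k)<I_\la(w)$, one sets $v_k=\max\{\underline u,\min\{u_k,\overline u\}\}\in W_\la$ (here $\underline u=\underline{u_\la}$, $\overline u=u_{\la'}$), writes $u_k=v_k-\underline{w_k}+\overline{w_k}$ with $\underline{w_k}=(u_k-\underline u)^-$, $\overline{w_k}=(u_k-\overline u)^+$, and expands the Gagliardo energy $J_0=C^n_s(\|u_k\|^2-\|v_k\|^2)$ by hand over the supports $\underline{S_k},\overline{S_k}$ and their complement. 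After pairing the cross terms with the sub/supersolution inequalities for $\underline u,\overline u$ and applying the mean value theorem to the singular and critical integrals, one arrives at
\[
I_\la(u_k)\geq I_\la(v_k)+\tfrac{C^n_s}{2}\bigl(\|\underline{w_k}\|^2+\|\overline{w_k}\|^2\bigr)-o(1)\bigl(\|\underline{w_k}\|^2+\|\overline{w_k}\|^2\bigr)-C\|\overline{w_k}\|^{2^*_s}.
\]
Since $|\underline{S_k}|,|\overline{S_k}|\to 0$ (because $u_k\to w$ in $L^2$ while $\underline u<w<\overline u$ a.e.), the error terms vanish and $I_\la(u_k)\geq I_\la(v_k)\geq I_\la(w)$ for large $k$, a contradiction. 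The price is the lengthy nonlocal bookkeeping in the expansion of $J_0$; the gain is that no differentiability of $I_\la$, no Hopf lemma for $L$, and no auxiliary regularity theory are required.
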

\begin{proof}
We argue by contradiction, so suppose $w$ is not a local minimum of $I_\la$. Then there exists a sequence $\{u_k\}\subset X_0$ satisfying
\begin{equation}\label{gm-5}
\|u_k-w\| \to 0\; \text{as}\; k \to \infty \; \text{and}\; I_\la({u_k})< I_\la(w).
\end{equation}
We define $\uline u= \uline{u_\la}$ and $\oline{u}= u_{\la^\prime}$ as sub and supersolution of $(P_\la)$ as defined in the proof of Proposition \ref{gm-prop2}. Also  we define
\begin{equation*}
v_k = \max\{\uline{u}, \min\{u_k,\uline{u}\}\}=\left\{
\begin{split}
\uline{u},\; &\text{if}\; u_k<\uline{u},\\
u_k,\; &\text{if}\; \uline{u}\leq u_k\leq\oline{u},\\
\oline{u},\; &\text{if}\; u_k>\uline{u},
\end{split}
\right.
\end{equation*}
and $\uline{w_k}= (u_k-\uline{u})^-$, $\oline{w_k} = (u_k-\oline{u})^+$. Correspondingly, we define the sets $\uline{S_k}= \text{Supp}(\uline{w_k})$ and $\oline{S_k}= \text{Supp}(\oline{w_k})$. Then $u_k = v_k - \uline{w_k}+\oline{w_k}$ and $v_k \in W_\la$ where $W_\la$ has been defined in Proposition \ref{gm-prop2}. It follows that
\begin{align*}
\int_\Om (u_k^+)^{1-q}dx &= \int_{\uline{S_k}} (u_k^+)^{1-q}dx + \int_{\oline{S_k}}(u_k^+)^{1-q}dx+ \int_{\uline{u}\leq v_k\leq \oline{u}} (v_k)^{1-q}dx\\
& = \int_{\uline{S_k}} ((u_k^+)^{1-q}- \uline{u}^{1-q})dx + \int_{\oline{S_k}}((u_k^+)^{1-q}-\oline{u}^{1-q})dx+ \int_{\Om} (v_k)^{1-q}dx\\
\text{and}\;\int_\Om (u_k^+)^{2^*_s}dx & = \int_{\uline{S_k}} (u_k^+)^{2^*_s}dx + \int_{\oline{S_k}}(u_k^+)^{2^*_s}dx+ \int_{\uline{u}\leq v_k\leq \oline{u}} (v_k)^{2^*_s}dx\\
& = \int_{\uline{S_k}} ((u_k^+)^{2^*_s}- \uline{u}^{2^*_s})dx + \int_{\oline{S_k}}((u_k^+)^{2^*_s}-\oline{u}^{2^*_s})dx+ \int_{\Om} (v_k)^{2^*_s}dx.
\end{align*}
Then we can express $I_\la(u_k)$ as
\begin{equation}\label{gm-6}
\begin{split}
I_\la(u_k)&= I_{\la}(v_k)+ \frac{J_0}{2} - \frac{1}{1-q}\left(\int_{\uline{S_k}} ((u_k^+)^{1-q}- \uline{u}^{1-q})dx+
\int_{\oline{S_k}}((u_k^+)^{1-q}-\oline{u}^{1-q})dx\right)\\
 &\quad  -\frac{\la}{2^*_s}\left( \int_{\uline{S_k}} ((u_k^+)^{2^*_s}- \uline{u}^{2^*_s})dx + \int_{\oline{S_k}}((u_k^+)^{2^*_s}-\oline{u}^{2^*_s})dx\right)
\end{split}
\end{equation}
where $J_0= C^n_s(\|u_k\|^2-\|v_k\|^2)$. While denoting $S_k = \{x \in \Om:\; \uline{u}\leq v_k\leq \oline{u}\}$ and $h_k(x,y)= (u_k(x)-u_k(y))^2 -(v_k(x)-v_k(y))^2$, we get
\begin{align*}
J_0 &= \int_{\uline{S_k}}\int_{\uline{S_k}}\frac{h_k(x,y)}{|x-y|^{n+2s}}dxdy + \int_{\oline{S_k}}\int_{\oline{S_k}}\frac{h_k(x,y)}{|x-y|^{n+2s}}dxdy+ 2\int_{\uline{S_k}}\int_{\oline{S_k}}\frac{h_k(x,y)}{|x-y|^{n+2s}}dxdy\\
&+\quad + 2\int_{\uline{S_k}}\int_{S_k}\frac{h_k(x,y)}{|x-y|^{n+2s}}dxdy+ 2\int_{\oline{S_k}}\int_{S_k}\frac{h_k(x,y)}{|x-y|^{n+2s}}dxdy.
\end{align*}
Since $u_k = \oline{w_k}+ \oline{u}$ and $v_k = \oline{u}$ in $\oline{S_k}$ and $u_k = \uline{u}-\uline{w_k}$ and $v_k = \uline{u}$ in $\uline{S_k}$  we get that
\begin{align*}
\int_{\uline{S_k}}\int_{\uline{S_k}}\frac{h_k(x,y)}{|x-y|^{n+2s}}dxdy &= \int_{\uline{S_k}}\int_{\uline{S_k}}\frac{(\uline{w_k}(x)-\uline{w_k}(y))^2}{|x-y|^{n+2s}}dxdy\\
 & \quad- 2\int_{\uline{S_k}}\int_{\uline{S_k}}\frac{(\uline{w_k}(x)-\uline{w_k}(y))(\uline{u}(x)-\uline{u}(y))}{|x-y|^{n+2s}}dxdy\\
 \int_{\oline{S_k}}\int_{\oline{S_k}}\frac{h_k(x,y)}{|x-y|^{n+2s}}dxdy &= \int_{\oline{S_k}}\int_{\oline{S_k}}\frac{(\oline{w_k}(x)-\oline{w_k}(y))^2}{|x-y|^{n+2s}}dxdy\\
 & \quad+ 2\int_{\oline{S_k}}\int_{\oline{S_k}}\frac{(\oline{w_k}(x)-\oline{w_k}(y))(\oline{u}(x)-\oline{u}(y))}{|x-y|^{n+2s}}dxdy.\\
\end{align*}
Also similarly we obtain
\begin{align*}
\int_{\uline{S_k}}\int_{\oline{S_k}}\frac{h_k(x,y)}{|x-y|^{n+2s}}dxdy &=  \int_{\uline{S_k}}\int_{\oline{S_k}}\frac{(\uline{w_k}(x)+\oline{w_k}(y))^2}{|x-y|^{n+2s}}dxdy \\
&\quad -2 \int_{\uline{S_k}}\int_{\oline{S_k}}\frac{(\uline{w_k}(x)+\oline{w_k}(y))(\uline{u}(x)-\oline{u}(y))}{|x-y|^{n+2s}}dxdy,\\
\int_{\uline{S_k}}\int_{S_k}\frac{h_k(x,y)}{|x-y|^{n+2s}}dxdy &= \int_{\uline{S_k}}\int_{S_k}\frac{\uline{w_k}^2(x)}{|x-y|^{n+2s}}dxdy- 2 \int_{\uline{S_k}}\int_{S_k}\frac{\uline{w_k}(x)(\uline{u}(x)-u_k(y))}{|x-y|^{n+2s}}dxdy,\\
\int_{\oline{S_k}}\int_{S_k}\frac{h_k(x,y)}{|x-y|^{n+2s}}dxdy &= \int_{\oline{S_k}}\int_{S_k}\frac{\oline{w_k}^2(x)}{|x-y|^{n+2s}}dxdy+ 2 \int_{\oline{S_k}}\int_{S_k}\frac{\oline{w_k}(x)(\oline{u}(x)-u_k(y))}{|x-y|^{n+2s}}dxdy.
\end{align*}
Since $\mc C \uline{S_k}= \oline{S_k}\cup S_k$, $\mc C \oline{S_k}= \uline{S_k}\cup S_k$ and
\begin{align*}
\|\uline{w_k}\|^2 &= \int_{\uline{S_k}}\int_{\uline{S_k}}\frac{(\uline{w_k}(x)-\uline{w_k}(y))^2}{|x-y|^{n+2s}}dxdy + 2 \int_{\uline{S_k}}\int_{\mc C{S_k}}\frac{\uline{w_k}^2(x)}{|x-y|^{n+2s}}dxdy\\
\|\oline{w_k}\|^2 &= \int_{\oline{S_k}}\int_{\oline{S_k}}\frac{(\oline{w_k}(x)-\oline{w_k}(y))^2}{|x-y|^{n+2s}}dxdy + 2 \int_{\oline{S_k}}\int_{\mc C{S_k}}\frac{\oline{w_k}^2(x)}{|x-y|^{n+2s}}dxdy,
\end{align*}
using all above estimates, we can express $J_0$  as
\begin{align*}
J_0 &= C^n_s(\|\uline{w_k}\|^2+ \|\oline{w_k}\|^2) + 2 \left(\int_{\uline{S_k}}\int_{\oline{S_k}}\frac{(\uline{w_k}(x)+\oline{w_k}(y))^2}{|x-y|^{n+2s}}dxdy - \int_{\uline{S_k}}\int_{\oline{S_k}}\frac{\uline{w_k}^2(x)}{|x-y|^{n+2s}}\right.\\
 &\quad \left.- \int_{\oline{S_k}}\int_{\uline{S_k}}\frac{\oline{w_k}^2(x)}{|x-y|^{n+2s}}dxdy \right) - 2\int_{\uline{S_k}}\int_{\uline{S_k}}\frac{(\uline{w_k}(x)-\uline{w_k}(y))(\uline{u}(x)-\uline{u}(y))}{|x-y|^{n+2s}}dxdy\\
 & \quad + 2\int_{\oline{S_k}}\int_{\oline{S_k}}\frac{(\oline{w_k}(x)-\oline{w_k}(y))(\oline{u}(x)-\oline{u}(y))}{|x-y|^{n+2s}}dxdy -4 \int_{\uline{S_k}}\int_{\oline{S_k}}\frac{(\uline{w_k}(x)+\oline{w_k}(y))(\uline{u}(x)-\oline{u}(y))}{|x-y|^{n+2s}}dxdy\\
 &\quad  -4 \int_{\uline{S_k}}\int_{S_k}\frac{\uline{w_k}(x)(\uline{u}(x)-u_k(y))}{|x-y|^{n+2s}}dxdy +4 \int_{\oline{S_k}}\int_{S_k}\frac{\oline{w_k}(x)(\oline{u}(x)-u_k(y))}{|x-y|^{n+2s}}dxdy.
\end{align*}
Now we notice that if $(x,y)\in \uline{S_k}\times S_k$ then $(\uline u(x)-u_k(y))\leq (\uline u(x)-\uline u(y))$, if $(x,y)\in \oline{S_k}\times S_k$ then $(\oline u(x)-u_k(y))\geq (\oline u(x)-\oline u(y))$ and
\begin{align*}&\int_{\uline{S_k}}\int_{\oline{S_k}}\frac{(\uline{w_k}(x)+\oline{w_k}(y))^2}{|x-y|^{n+2s}}dxdy - \int_{\uline{S_k}}\int_{\oline{S_k}}\frac{\uline{w_k}^2(x)}{|x-y|^{n+2s}}dxdy - \int_{\oline{S_k}}\int_{\uline{S_k}}\frac{\oline{w_k}^2(x)}{|x-y|^{n+2s}}dxdy \\
&= 2 \int_{\oline{S_k}}\int_{\uline{S_k}}\frac{\oline{w_k}(x)\uline{w_k}(y)}{|x-y|^{n+2s}}dxdy.
\end{align*}
Also using change of variables, we have
\begin{align*}
&\int_{\uline{S_k}}\int_{\oline{S_k}}\frac{(\uline{w_k}(x)+\oline{w_k}(y))(\uline{u}(x)-\oline{u}(y))}{|x-y|^{n+2s}}dxdy\\
& =  \int_{\uline{S_k}}\int_{\oline{S_k}}\frac{\uline{w_k}(x)(\uline{u}(x)-\oline{u}(y))}{|x-y|^{n+2s}}dxdy - \int_{\oline{S_k}}\int_{\uline{S_k}}\frac{\oline{w_k}(x)(\oline{u}(x)-\uline{u}(y))}{|x-y|^{n+2s}}dxdy.
\end{align*}
Therefore altogether we obtain
\begin{align*}
J_0 &\geq C^n_s(\|\uline{w_k}\|^2 + \|\oline{w_k}\|^2) + 4 \int_{\oline{S_k}}\int_{\uline{S_k}}\frac{\oline{w_k}(x)\uline{w_k}(y)}{|x-y|^{n+2s}}dxdy + 2\int_{\mb R^n}\oline{w_k}(-\De)^s \oline{u}~dx -2 \int_{\mb R^n}\uline{w_k}(-\De)^s \uline{u}~dx\\
&  \quad -4 \int_{\oline{S_k}}\int_{\uline{S_k}}\frac{\oline{w_k}(x)(\oline{u}(x)-\uline{u}(y))}{|x-y|^{n+2s}}dxdy + 4 \int_{\uline{S_k}}\int_{\oline{S_k}}\frac{\uline{w_k}(x)(\uline{u}(x)-\oline{u}(y))}{|x-y|^{n+2s}}dxdy\\
& \quad -4 \int_{\uline{S_k}}\int_{\oline{S_k}}\frac{(\uline{w_k}(x)+\oline{w_k}(y))(\uline{u}(x)-\oline{u}(y))}{|x-y|^{n+2s}}dxdy\\
& \geq C^n_s( \|\uline{w_k}\|^2 + \|\oline{w_k}\|^2) +  2\int_{\mb R^n}\oline{w_k}(-\De)^s \oline{u}~dx -2 \int_{\mb R^n}\uline{w_k}(-\De)^s \uline{u}~dx
\end{align*}
where we used the fact that if $(x,y)\in \oline{S_k}\times \uline{S_k}$ then $\oline{w_k}(x)\uline{w_k}(y)\geq 0$.
Now recalling that $\uline{u}$ and $\oline{u}$ forms sub and supersolution of $(P_\la)$ respectively, inserting the above inequality in \eqref{gm-6} we obtain
\begin{equation}
\begin{split}
I_\la(u_k) &\geq I_\la(v_k) + \frac{C^n_s\|\uline{w_k}\|^2}{2}+\frac{C^n_s\|\oline{w_k}\|^2}{2} + \int_{\oline{S_k}}\left( \frac{\oline{u}^{1-q}-(\oline{u}+\oline{w_k})^{1-q}}{1-q} + \oline{u}^{-q}\oline{w_k}\right)dx\\
& \quad + \int_{\uline{S_k}}\left( \frac{\uline{u}^{1-q}-(\uline{u}-\uline{w_k})^{1-q}}{1-q} - \uline{u}^{-q}\uline{w_k}\right)dx +\la \int_{\oline{S_k}}\left( \frac{\oline{u}^{2^*_s}-(\oline{u}+\oline{w_k})^{2^*_s}}{2^*_s} + \oline{u}^{2^*_s-1}\oline{w_k}\right)dx\\
& \quad + \la\int_{\uline{S_k}}\left( \frac{\uline{u}^{2^*_s}-(\uline{u}-\uline{w_k})^{2^*_s}}{2^*_s} - \uline{u}^{2^*_s-1}\uline{w_k}\right)dx.
\end{split}
\end{equation}
Now from mean value Theorem it follows that there exists $\theta\in (0,1)$ (where $\theta$ may change its value for different function below) such that
\begin{equation}\label{gm-7}
\begin{split}
I_{\la}(u_k) & \geq I_\la(v_k) + \frac{C^n_s\|\uline{w_k}\|^2}{2}+\frac{C^n_s\|\oline{w_k}\|^2}{2}- \int_{\oline{S_k}}( (\oline{u}+\theta\oline{w_k})^{-q}- \oline{u}^{-q})\oline{w_k}dx\\
& \quad  - \int_{\uline{S_k}}(  \uline{u}^{-q} - (\uline{u}+\theta\uline{w_k})^{-q})\uline{w_k}dx - \la \int_{\oline{S_k}}( (\oline{u}+\theta\oline{w_k})^{2^*_s-1}- \oline{u}^{2^*_s-1})\oline{w_k}dx\\
& \quad -\la \int_{\uline{S_k}}(  \uline{u}^{2^*_s-1} - (\uline{u}+\theta\uline{w_k})^{2^*_s-1})\uline{w_k}dx\\
& \geq  I_\la(v_k) + \frac{C^n_s\|\uline{w_k}\|^2}{2}+\la \int_{\oline{S_k}}( (\oline{u}+\theta\oline{w_k})^{2^*_s-1}- \oline{u}^{2^*_s-1})\oline{w_k}dx \\
& \quad-\la \int_{\uline{S_k}}(  \uline{u}^{2^*_s-1} - (\uline{u}+\theta\uline{w_k})^{2^*_s-1})\uline{w_k}dx.
\end{split}
\end{equation}
Now since $2^*_s>2$, there exists constant $C>0$ such that \eqref{gm-7} reduces to
\begin{equation}\label{gm-8}
\begin{split}
I_\la(u_k) &\geq I_\la(v_k) + \frac{C^n_s\|\uline{w_k}\|^2}{2}+\frac{C^n_s\|\oline{w_k}\|^2}{2} - C \int_{\uline{S_k}}(\uline{u}^{2^*_s-2}- \uline{w_k}^{2^*_s-2} )\uline{w_k}^2dx \\
&\quad - C \int_{\oline{S_k}}(\oline{u}^{2^*_s-2}- \oline{w_k}^{2^*_s-2} )\oline{w_k}^2dx\\
&\geq  I_\la(v_k) + \frac{C^n_s\|\uline{w_k}\|^2}{2}+\frac{C^n_s\|\oline{w_k}\|^2}{2}  - C \left(\int_{\uline{S_k}}|\uline{u}|^{2^*_s}\right)^{\frac{2^*_s-2}{2^*_s}}\|\uline{w_k}\|^2\\
&\quad - C \left(\int_{\oline{S_k}}|\oline{u}|^{2^*_s}\right)^{\frac{2^*_s-2}{2^*_s}}\|\oline{w_k}\|^2 - C\|\oline{w_k}\|^{2^*_s}.
\end{split}
\end{equation}
\textbf{Claim-} $\lim\limits_{k \to \infty}|\oline{S_k}|=0$ and $\lim\limits_{k \to \infty}|\uline{S_k}|=0$.\\
Let $\alpha>0$ and define
\begin{align*}
A_k = \{x \in \Om:\; u_k \geq \oline{u} \; \text{and}\; \oline{u}>w+\alpha\}, \; &\hat{A}_k = \{x \in \Om:\; u_k \leq \uline{u} \; \text{and}\; \uline{u}<w-\alpha\}\\
B_k = \{x \in \Om:\; u_k \geq \oline{u} \; \text{and}\; \oline{u}\leq w+\alpha\}, \; &\hat{B}_k = \{x \in \Om:\; u_k \leq \uline{u} \; \text{and}\; \uline{u}\geq w-\alpha\}.
\end{align*}
Since
\[0=\mc L (\{x\in \Om:\; \oline{u}< {w}\}) = \mc L(\cap_{j=1}^{\infty}\{x\in \Om:\; \oline{u}< {w}+ \frac{1}{j}\})\]
so there exists $j_0\geq 1$ large enough and $\alpha< 1/j_0$ such that $\mc L(\{x\in \Om:\; \oline{u}<w+\alpha\})\leq \e/2$. This implies that $\mc L(B_k)\leq \e/2$ and similarly, we obtain $\mc L(\hat{B}_k) \leq \e/2$. From \eqref{gm-5} we already have $|u_k -w|_2 \to 0$ as $k \to \infty$. So for $k \geq k_0$ large enough we get that
\[\frac{\alpha^2\e}{2} \geq \int_\Om|u_k-w|^2~dx \geq \int_{A_k}|u_k-w|^2~dx\geq \alpha^2\mc L(A_k)\]
which implies that $\mc L(A_k)\leq \frac{\e}{2}$ for $k \geq k_0$. Similarly we obtain $\mc L(\hat{A}_k)\leq \frac{\e}{2}$ for $k \geq k_0$. Now since $\oline{S_k} \subset A_k \cap B_k$ and $\uline{S_k} \subset \hat{A}_k \cap \hat{B}_k$ we get that $\mc L(\oline{S_k}) \leq \e$ and $\mc L(\uline{S_k})\leq \e$ for $k \geq k_0$. This proves the claim. Thus
\[\left(\int_{\oline{S_k}}|\oline{u}|^{2^*_s}\right)^{\frac{2^*_s-2}{2^*_s}} \leq o(1)\; \text{and}\; \left(\int_{\uline{S_k}}|\uline{u}|^{2^*_s}\right)^{\frac{2^*_s-2}{2^*_s}} \leq o(1)\]
which imposing in \eqref{gm-8} gives that for large enough $k$
\[I_\la(u_k) {\geq} I_\la(v_k) \geq I_{\la}(w)\]
which is a contradiction to \eqref{gm-5}. Therefore $w$ must be a local minimum of $I_\la$ over $X_0$. {\hfill {$\square$}\goodbreak \medskip}
\end{proof}

\begin{Theorem}\label{firstsol-gm}
There exists a positive weak solution of $(P_\La)$.
\end{Theorem}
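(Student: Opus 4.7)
The plan is to implement the standard extremal-value argument for singular critical problems, adapted to the nonlocal setting. Fix a sequence $\lambda_n \in (0,\Lambda)$ with $\lambda_n \uparrow \Lambda$, and let $u_n \in X_0 \cap C_{\phi_q}^+(\Omega)$ denote the weak solution of $(P_{\lambda_n})$ produced by Proposition \ref{gm-prop2}. By construction $u_n \geq w$ a.e.\ in $\Omega$, where $w$ is the solution of the purely singular problem from Lemma \ref{gm-lem1}, and by Lemma \ref{loc-min-gm}, $u_n$ is the global minimum of $I_{\lambda_n}$ over a convex set containing $w$.

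First I would establish a uniform bound on $\{u_n\}$ in $X_0$. The minimization property gives $I_{\lambda_n}(u_n) \leq I_{\lambda_n}(w)$, whose right-hand side is bounded uniformly in $n$ since $\lambda_n \le \Lambda$, $w \in X_0 \cap L^{2^*_s}(\Omega)$, and $w^{1-q} \in L^1(\Omega)$; the latter follows from the boundary behaviour $w \sim \phi_q$ together with the hypothesis $q(2s-1) < 2s+1$. Proposition \ref{gm-prop1} legitimates testing the equation with $u_n$, yielding the Nehari-type identity
\[
\|u_n\|^2 = \int_\Omega u_n^{1-q}\,dx + \lambda_n \int_\Omega u_n^{2^*_s}\,dx,
\]
and combining it with the energy inequality above produces
\[
\lambda_n\left(\tfrac{1}{2}-\tfrac{1}{2^*_s}\right)\int_\Omega u_n^{2^*_s}\,dx = I_{\lambda_n}(u_n) + \left(\tfrac{1}{1-q}-\tfrac{1}{2}\right)\int_\Omega u_n^{1-q}\,dx.
\]
For $q>1$ the pointwise bound $u_n^{1-q} \leq w^{1-q}$ keeps the right-hand side uniformly bounded; for $0<q\leq 1$, H\"older's inequality controls $\int_\Omega u_n^{1-q}\,dx$ sublinearly by $\int_\Omega u_n^{2^*_s}\,dx$, which can be absorbed. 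Hence $\int_\Omega u_n^{2^*_s}\,dx$ and then $\|u_n\|$ are bounded independently of $n$.

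Up to a subsequence, $u_n \rightharpoonup u_\Lambda$ weakly in $X_0$, strongly in $L^p(\Omega)$ for $p<2^*_s$, and a.e.\ in $\Omega$; in particular $u_\Lambda \geq w$ a.e., so $u_\Lambda$ is bounded below by a positive constant on every compact subset of $\Omega$. To pass to the limit in the weak formulation of $(P_{\lambda_n})$ tested against an arbitrary $\phi \in X_0$, the bilinear form converges by weak convergence; the singular term is handled by dominated convergence via the uniform majorant $u_n^{-q}|\phi| \leq w^{-q}|\phi|$, which lies in $L^1(\Omega)$ thanks to the Hardy-type estimate \eqref{gm-11}; and the critical term is treated by noting that $u_n^{2^*_s-1}$ is bounded in $L^{2^*_s/(2^*_s-1)}(\Omega)$ and converges a.e.\ to $u_\Lambda^{2^*_s-1}$, hence weakly in that space, so it pairs correctly with $\phi \in L^{2^*_s}(\Omega)$. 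Consequently $u_\Lambda$ solves $(P_\Lambda)$ in the sense of Definition 1.1, and its membership in $C_{\phi_q}^+(\Omega)$ is provided by Theorem \ref{gm-reg2} applied to any weak solution.

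The main difficulty is the lack of compactness of the embedding $X_0 \hookrightarrow L^{2^*_s}(\Omega)$: one cannot invoke Palais--Smale nor expect $\int u_n^{2^*_s}\,dx \to \int u_\Lambda^{2^*_s}\,dx$. The saving feature is that establishing $u_\Lambda$ as a \emph{weak} solution requires only weak convergence of $u_n^{2^*_s-1}$ against a fixed test function, which follows from the uniform $L^{2^*_s}$ bound and the a.e.\ convergence; and this uniform $L^{2^*_s}$ bound is obtained in a clean way by comparing the minimization energy of $u_n$ with that of the $n$-independent subsolution $w$, bypassing any $\lambda$-uniform threshold condition.
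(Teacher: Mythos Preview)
Your proposal is correct and follows essentially the same approach as the paper: choose $\lambda_n\uparrow\Lambda$, take the minimizing solutions $u_n$ of $(P_{\lambda_n})$ from Proposition~\ref{gm-prop2}, exploit $I_{\lambda_n}(u_n)\le I_{\lambda_n}(\underline{u_\lambda})$ together with the Nehari identity to obtain a uniform $X_0$-bound, and pass to the weak limit. Your write-up is in fact more careful than the paper's own proof, which is quite terse: the paper only asserts that the energy bound implies uniform boundedness in $X_0$ and that the weak limit solves $(P_\Lambda)$, without spelling out the passage in the critical term. The one tactical difference is that the paper records the monotonicity $u_m\le u_{m+1}$ of the minimal solutions (so the limit could be handled by monotone convergence), whereas you bypass this by using the weak convergence of $u_n^{2^*_s-1}$ in $L^{2^*_s/(2^*_s-1)}(\Omega)$; both routes are valid, and yours is arguably cleaner since it does not rely on knowing the $u_n$ are ordered.
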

\begin{proof}
 Let $\la_m\uparrow\La$  as $m \to \infty$ and $\{u_{\la_m}\}$ be a sequence of positive weak solutions to $(P_{\la_m})$, such that
$u_{\la_m}$ forms the local minimum of $I_{\la_m}$ as seen in Lemma \ref{loc-min-gm}. Since we consider the minimal solutions, we get $u_m\leq u_{m+1}$ for each $m$. Then, it is easy to see that $I_{\la_m}<0$ in the case $0<q<1$ whereas there exists a constant $K$ independent of $m$ such that $I_{\la_m}\leq K$ for all $m$ when $q>1$ but $q(2s-1)<(2s+1)$. This implies that $\{u_{\la_m}\}$ is uniformly bounded in $X_0$. {Therefore, up to a subsequence there exists $u_\La\in X_0$ such that} $u_{\la_m}\rightharpoonup u_\La$ weakly and pointwise a.e. in $X_0$ as $m \rightarrow \infty$. Also by construction $u_{\la_m} \geq \underline{u_{\la_1}}$ as defined in Lemma \ref{gm-lem1}. Therefore, $u_\La$ is a positive weak solution of $( P_\La)$.  {\hfill {$\square$}\goodbreak \medskip}
\end{proof}

\section{Multiplicity result}
We have already obtained the first solution for $(P_\la)$ in the previous section when $\la \in (0,\La)$ in $X_0$-topology. We fix $\la \in (0,\La)$ and let $w$ denotes the first weak solution of $(P_\la)$ obtained in Proposition \ref{gm-prop2}. 
In this section, we prove the existence of second solution of $( P_\la)$ using the machinery of mountain pass Lemma and {with the help of} Ekeland variational principle. Let us define the set
\[T = \{x \in X_0:\; u\geq w\; \text{a.e. in}\; \Om \}\]
and since $w$ forms a local minimizer of $I_\la$ we get that $I_\la(u) \geq  I_\la(w)$ whenever $\|u-w\|\leq \sigma_0$, for some constant $\sigma_0>0$. Then one of the following cases holds
\begin{enumerate}
\item[(ZA)](Zero Altitude) $\inf \{ I_\la(u)|\; u \in T, \;\|u-w\|=\sigma\}=  I_\la(w)$ for all $\sigma\in (0,\sigma_0)$.
\item[(MA)](Mountain Pass) There exists a $\sigma_1 \in (0,\sigma_0)$ such that $\inf\{ I_\la(u)|\; u \in T,\; \|u-w\|=\sigma_1\}> I_\la(w)$.
\end{enumerate}

\begin{Lemma}\label{sec-sol-ZA}
Let $(ZA)$ holds then there exists a $v \in T$ which solves $(P_\la)$ weakly and $\|v-w\| = \sigma$ for all $\sigma \in (0,\sigma_0)$.
\end{Lemma}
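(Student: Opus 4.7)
The plan is to apply Ekeland's variational principle on the closed subset
\[T_\sigma := \{u \in T : \|u - w\| = \sigma\}\]
of $X_0$. Hypothesis (ZA) gives $\inf_{T_\sigma} I_\la = I_\la(w)$, so a minimizing sequence $\{u_n\} \subset T_\sigma$ with $I_\la(u_n) \to I_\la(w)$ exists, and Ekeland's principle upgrades it to a quasi-minimizer satisfying
\[I_\la(u) \geq I_\la(u_n) - \tfrac{1}{n} \|u - u_n\| \quad \text{for all } u \in T_\sigma.\]
The identity $\|u_n - w\| = \sigma$ makes $\{u_n\}$ automatically bounded, so up to a subsequence $u_n \rightharpoonup v$ weakly in $X_0$ and $u_n \to v$ a.e.\ in $\Om$; the convexity and weak closedness of $T$ give $v \in T$.

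Next I would convert the quasi-minimality into a variational equation for $v$. For an arbitrary $\varphi \in X_0$, I would perturb $u_n$ by $t\varphi$ and project back onto $T_\sigma$: the sphere constraint is handled by radial rescaling (smooth near $u_n$ since $\sigma > 0$), while the obstacle constraint $u \geq w$ is handled by the truncation $\max\{u_n + t\varphi, w\}$ combined with the Perron-type decomposition used in the proof of Proposition \ref{gm-prop2}. Proposition \ref{gm-prop1} supplies the G\^ateaux differentiability of $I_\la$ at $u_n$ along these admissible directions because $u_n \geq w \in C_{\phi_q}^+(\Om)$. Passing $t \to 0^+$, then $n \to \infty$, and letting $\varphi$ range over $X_0$ collapses the resulting one-sided inequalities to the weak formulation of $(P_\la)$ for the limit $v$.

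The hard part will be the strong convergence $u_n \to v$ in $X_0$, which is precisely what guarantees $\|v - w\| = \sigma$ and hence $v \neq w$. Testing the approximate Euler-Lagrange relation against $u_n - v$ and subtracting the weak equation satisfied by $v$ reduces the task to controlling the critical term $\la \int_\Om (u_n^{2^*_s - 1} - v^{2^*_s - 1})(u_n - v)\,dx$; the singular contribution is dominated by $w^{-q} |u_n - v|$ thanks to $u_n, v \geq w$ and passes to the limit by dominated convergence. A Brezis--Lieb decomposition together with the concentration-compactness principle for $(-\De)^s$ then forces strong convergence, since the energy level $I_\la(w)$ sits strictly below the usual threshold $I_\la(w) + \tfrac{s}{n}\la^{-(n-2s)/(2s)} S^{n/(2s)}$ (so no mass can escape along a Talenti profile). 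The principal subtlety is keeping track of which perturbations remain admissible when one simultaneously imposes the sphere and the obstacle constraint, and this is precisely where the Perron-type truncation developed in Proposition \ref{gm-prop2} is indispensable.
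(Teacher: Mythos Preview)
Your outline has a genuine gap at the step where you pass from the Ekeland quasi-minimality to the weak equation. Working on the sphere $T_\sigma$ and ``projecting back by radial rescaling'' only allows you to test $I_\la$ against perturbations \emph{tangent} to the sphere at $u_n$; the radial direction $u_n-w$ is invisible to the Ekeland inequality on $T_\sigma$. Concretely, after your limiting procedure you would obtain an identity of the form
\[
\langle v,\varphi\rangle - \int_\Om (v^{-q}+\la v^{2^*_s-1})\varphi\,dx = \mu\,\langle v-w,\varphi\rangle\qquad\text{for all }\varphi\in X_0,
\]
with an unknown multiplier $\mu\in\mb R$, and nothing in your argument forces $\mu=0$. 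The Perron-type truncation of Proposition~\ref{gm-prop2} disposes of the \emph{obstacle} constraint $u\ge w$, not the sphere constraint $\|u-w\|=\sigma$, so it does not help here. The (ZA) hypothesis does say that the infimum on every nearby sphere is the same, but this does not by itself control the radial derivative of $I_\la$ along the particular ray through $u_n$.

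The paper avoids this entirely by applying Ekeland on the closed \emph{annulus}
\[
W=\{u\in T:\ \sigma-r\le\|u-w\|\le\sigma+r\},\qquad 0<r<\min(\sigma,\sigma_0-\sigma),
\]
on which (ZA) still gives $\inf_W I_\la=I_\la(w)$. The Ekeland output $\{v_k\}$ satisfies $\|u_k-v_k\|\le 1/k$ with $\|u_k-w\|=\sigma$, so $\|v_k-w\|\to\sigma$ and the $v_k$ sit strictly inside the radial window. Hence for every $z\in T$ and every small $\e>0$ the convex combination $v_k+\e(z-v_k)$ belongs to $W$ \emph{without any rescaling}, and the third Ekeland inequality yields directly
\[
\int_{\mb R^n}(-\De)^s v_k\,(z-v_k)\,dx-\int_\Om (v_k^{-q}+\la v_k^{2^*_s-1})(z-v_k)\,dx\ \ge\ -\tfrac1k\|z-v_k\|\quad\text{for all }z\in T,
\]
a variational inequality over the full cone $T$ with no multiplier. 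From this point on your plan matches the paper: the truncation $\phi_{k,\e}=(v_k+\e\varphi-w)^-$ removes the obstacle constraint and produces the weak equation for the limit $v$; dominated convergence (via $v_k\ge w$ and Hardy) handles the singular term; and Brezis--Lieb together with the fact that the level $I_\la(w)$ lies strictly below the compactness threshold gives $v_k\to v$ strongly, hence $\|v-w\|=\sigma$ and $v\ne w$.
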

\begin{proof}
We follow the proof of Lemma $2.6$ of \cite{haitao} in a nonlocal framework. We fix $\sigma \in (0, \sigma_0)$ and $r>0$ such that $\sigma -r>0$ and $\sigma +r< \sigma_0$. Let us define the set
\[W= \{u \in T|\; 0<\sigma-r \leq \|u-w\|\leq \sigma+r\}\]
which is closed in $X_0$ and by $(ZA)$, $\inf\limits_{u \in W} I_\la(u)=  I_\la(w)$. So using Ekeland variational principle, for any minimizing sequence $\{u_k\}\subset X_0$ satisfying $\|u_k\|= \sigma$ and $ I_\la(u_k)\leq I_\la(w)+ \frac{1}{k}$, we get another sequence $\{v_k\}\subset W$ such that
\begin{equation}\label{sec-sol-gm1}
\left\{\begin{split}
& I_\la(v_k) \leq  I_\la(u_k)\leq I_\la(w)+ \frac{1}{k}\\
&\|u_k-v_k\| \leq \frac{1}{k}\\
& I_\la(v_k) \leq I_\la(z)+\frac{1}{k}\|z-v_k\|,\; \text{for all}\; z \in W.
\end{split}\right.
\end{equation}
We can choose $\e >0$ small enough so that $v_k + \e(z-v_k) \in W$ for $z \in T$. So from \eqref{sec-sol-gm1} we obtain
\[\frac{ I_\la(v_k + \e(z-v_k)) - I_\la(v_k)}{\e} \geq -\frac{1}{k}\|z-v_k\|.\]
Letting $\e \to 0^+$ and using the fact that $v_k\geq w$ for each $k$, for $z \in T$ we get
\begin{equation}\label{sec-sol-gm2}
\int_{\mb R^n}(-\De)^sv_k(z-v_k)-\int_\Om v_k^{-q}(z-v_k)~dx -\la\int_\Om v_k^{2^*_s-1}(z-v_k)~dx\geq -\frac{1}{k}\|z-v_k\|.
\end{equation}
Now since $\{v_k\}$ forms a bounded sequence in $X_0$, we get that there exists a $v \in X_0$ such that, up to a subsequence, $v_k \rightharpoonup v$ weakly in $X_0$ and pointwise a.e. in $\Om $ as $k \to \infty$. Since $v_k\geq w$ for each $k$, we get $v\geq w$ a.e. in $\Om$. In what follows, we will prove that $v$ is a weak solution of $(P_\la)$. For $\phi \in X_0$ and $\e>0$, we set $\phi_{k,\e} = (v_k+\e\phi-w)^- \in X_0$ which implies that $(v_k +\e\phi+\phi_{k,\e}) \in T$. Putting $z = v_k +\e\phi+\phi_{k,\e}$ in \eqref{sec-sol-gm2} we get
\begin{equation}\label{sec-sol-gm3}
\begin{split}
&C^n_s\int_Q \frac{(v_k(x)-v_k(y))((\e\phi+\phi_{k,\e})(x)-(\e\phi+\phi_{k,\e})(y))}{|x-y|^{n+2s}}~dxdy -  \int_{\Om}v_k^{-q}(\e\phi+\phi_{k,\e})~dx\\
&\quad \quad-{\la} \int_\Om v_k^{2^*_s-1}(\e\phi+\phi_{k,\e})~dx \geq \frac{-1}{k}\|(\e\phi+\phi_{k,\e})\|.
\end{split}
\end{equation}
We define the sets $\Om_{k,\e}= \text{Supp}\;\phi_{k,\e}$, $\Om_\e= \text{Supp}\;\phi_\e$ and $\Om_0=\{x \in \Om:\; v(x)=w(x)\}$. Then we get that $\mc L(\Om_\e\setminus \Om_0) \to 0$ as $\e \to 0$ and $\mc L(\Om_{k,\e}\setminus \Om_\e)+ \mc L(\Om_\e\setminus\Om_{k,\e}) \to 0$ as $k \to \infty$. Also since $|\phi_{k,\e}| \leq w+\e|\phi|$, using Lebesgue Dominated convergence theorem we get $\phi_{k,\e} \to \phi_\e= (v+\e\phi -w)^-$ in $L^m(\Om)$ for all $m \in [1,2^*_s]$. Moreover $\phi_{k,\e} \rightharpoonup \phi_\e$ weakly in $X_0$ and pointwise a.e. in $\Om$ as $k \to \infty$. {Now we estimate the following integral
\begin{equation}\label{sec-sol-gm4}
\begin{split}
&\int_Q \frac{(v_k(x)-v_k(y))(\phi_{k,\e}(x)-\phi_{k,\e}(y))}{|x-y|^{n+2s}}~dxdy \\
&= \int_Q \frac{(v_k(x)-v_k(y))(\phi_{\e}(x)-\phi_{\e}(y))}{|x-y|^{n+2s}}~dxdy\\
 &\quad+ \int_Q \frac{(v_k(x)-v_k(y))((\phi_{k,\e}-\phi_\e)(x)-(\phi_{k,\e}-\phi_\e)(y))}{|x-y|^{n+2s}}~dxdy := I_1 + I_2.
\end{split}
\end{equation}
We show that $I_2 \leq o_k(1)$ for which we split the integrals and estimate them separately. Let $H_k = \Om_{k,\e}\cap \Om_\e$ and $G_k= \Om_{k,\e}\setminus \Om_\e \cup \Om_\e\setminus \Om_{k,\e}$. Then
\begin{equation}\label{new1}
\begin{split}
&\int_\Om \int_{\mc C \Om} \frac{(v_k(x)-v_k(y))((\phi_{k,\e}-\phi_\e)(x)-(\phi_{k,\e}-\phi_\e)(y))}{|x-y|^{n+2s}}\\
&\leq \int_{H_k}\int_{\mc C \Om} \frac{v(x)(v-v_k)(x)}{|x-y|^{n+2s}} + \int_{G_k}\int_{\mc C \Om} \frac{v_k(x)(\phi_{k,\e}-\phi_\e)(x)}{|x-y|^{n+2s}}\\
& \leq \int_{H_k}\int_{\mc C \Om} \frac{v(x)(v-v_k)(x)}{|x-y|^{n+2s}} +\int_{G_k}\int_{\mc C \Om} \frac{v_k(x)\phi_{k,\e}(x)}{|x-y|^{n+2s}}\\
& =\int_{H_k}\int_{\mc C \Om} \frac{v(x)(v-v_k)(x)}{|x-y|^{n+2s}} + o_k(1)
\end{split}
\end{equation}
using the fact that $\mc L(\Om_{k,\e}\setminus \Om_\e)+ \mc L(\Om_\e\setminus\Om_{k,\e}) \to 0$ as $k \to \infty$ and Lebesgue Dominated convergence theorem. Similarly
\begin{equation}\label{new2}
\begin{split}
&\int_\Om \int_{ \Om} \frac{(v_k(x)-v_k(y))((\phi_{k,\e}-\phi_\e)(x)-(\phi_{k,\e}-\phi_\e)(y))}{|x-y|^{n+2s}}\\
&\leq \int_{H_k}\int_{H_k}\frac{(v(x)-v(y))((v-v_k)(x)-(v-v_k)(y))}{|x-y|^{n+2s}}\\
 &\quad+ 2\int_{H_k}\int_{G_k} \frac{(v_k(x)-v_k(y))((\phi_{k,\e}-\phi_\e)(x)-(\phi_{k,\e}-\phi_\e)(y))}{|x-y|^{n+2s}}\\
&\quad \quad+ \int_{G_k}\int_{G_k} \frac{(v_k(x)-v_k(y))((\phi_{k,\e}-\phi_\e)(x)-(\phi_{k,\e}-\phi_\e)(y))}{|x-y|^{n+2s}}\\
&\leq \int_{H_k}\int_{H_k}\frac{(v(x)-v(y))((v-v_k)(x)-(v-v_k)(y))}{|x-y|^{n+2s}} + o_k(1)
\end{split}
\end{equation}
using again the Lebesgue Dominated convergence theorem with the fact that $v_k-v \to 0$ and $\phi_{k,\e} -\phi_\e\to 0$ pointwise as $k \to \infty$. Combining \eqref{new1} and \eqref{new2} we obtain that
\begin{align*}
I_2 \leq \int_{H_k}\int_{H_k\cup \mc C\Om}\frac{(v(x)-v(y))((v-v_k)(x)-(v-v_k)(y))}{|x-y|^{n+2s}} +o_k(1) =o_k(1).
\end{align*}}
Therefore using this in \eqref{sec-sol-gm4}, we obtain
\[\int_Q \frac{(v_k(x)-v_k(y))(\phi_{k,\e}(x)-\phi_{k,\e}(y))}{|x-y|^{n+2s}}~dxdy \leq \int_Q \frac{(v_k(x)-v_k(y))(\phi_{\e}(x)-\phi_{\e}(y))}{|x-y|^{n+2s}}~dxdy +o_k(1).\]
Moreover, we have that $|v_k^{-q}(\e\phi+\phi_{k,\e})|\leq w^{-q}(w+2\e\phi) \in L^1(\Om)$ using the Hardy's inequality. Thus using Lebesgue Dominated convergence theorem and
passing on the limits $k \to \infty$ in \eqref{sec-sol-gm3} we get
\[0 \leq C^n_s\int_Q \frac{(v_k(x)-v_k(y))((\e\phi+\phi_{\e})(x)-(\e\phi+\phi_{\e})(y))}{|x-y|^{n+2s}}~dxdy- \int_\Om (v^{-q}+\la v^{2^*_s-1})(\e\phi+\phi_\e)~dx.  \]
Using the fact that $w$ is a weak solution of $(P_\la)$ and $v\geq w$, the above inequality implies that
\begin{align*}
&C^n_s\int_Q \frac{(v(x)-v(y))(\phi(x)-\phi(y))}{|x-y|^{n+2s}}~dxdy- \int_\Om v^{-q}\phi~dx - \la\int_\Om v^{2^*_s-1}\phi~dx\\
&\geq -\frac{1}{\e} \left( C^n_s\int_Q \frac{(v(x)-v(y))(\phi_{\e}(x)-\phi_{\e}(y))}{|x-y|^{n+2s}}~dxdy- \int_\Om v^{-q}\phi_\e~dx - \la\int_\Om v^{2^*_s-1}\phi_\e~dx \right)\\
& \geq \frac{1}{\e} \left(C^n_s \int_Q \frac{((w-v)(x)-(w-v)(y))(\phi_{\e}(x)-\phi_{\e}(y))}{|x-y|^{n+2s}}~dxdy + \int_{\Om}(v^{-q}-w^{-q})\phi_\e~dx\right)\\
& \geq C^n_s\int_{\Om_\e}\int_{\Om_\e} \frac{((v-w)(x)-(v-w)(y))(\phi(x)-\phi(y))}{|x-y|^{n+2s}}~dxdy\\
 &\quad +2C^n_s \int_{\Om_\e}\int_{\{w\leq v+\e\phi\}} \frac{((v-w)(x)-(v-w)(y))\phi(x)}{|x-y|^{n+2s}}~dxdy \\
&\quad +2C^n_s\int_{\Om_\e}\int_{\mc C\Om} \frac{(v-w)(x)\phi(x)}{|x-y|^{n+2s}}~dxdy + \int_{\Om_\e}(v^{-q}-w^{-q})\phi~dx\\
&=o(1)\;\text{as}\; \e \to 0^+
\end{align*}
using the fact that $|\Om_\e\setminus\Om_0|\to 0$ as $\e\to 0^+$. From this, we get that
\[C^n_s\int_Q \frac{(v(x)-v(y))(\phi(x)-\phi(y))}{|x-y|^{n+2s}}~dxdy- \int_\Om v^{-q}\phi~dx - \la\int_\Om v^{2^*_s-1}\phi~dx=0\; \text{for all}\; \phi \in X_0.\]
\textbf{Claim-} The sequence $v_k\to v$ strongly in $X_0$ as $k \to \infty$.\\
From Brezis Leib lemma we have
\begin{align*}
\|v_k\|^2-\|v_k-v\|^2 &=\|v\|^2+o(1)\\
\int_\Om |v_k|^{2^*_s}~dx -\int_\Om |v_k-v|^{2^*_s}~dx  &= \int_\Om |v|^{2^*_s}~dx +o(1).
\end{align*}
Since $v_k,v\geq w$ a.e. in $\Om$, we get
\[\int_\Om |v_k|^{1-q}~dx -\int_\Om |v|^{1-q}~dx = \int_\Om (v_k+\theta v)^{-q}(v_k-v)~dx, \; \text{for} \; \theta\in [0,1].\]
We know that $(v_k+\theta v)^{-q}(v_k-v) \to 0$ pointwise a.e. in $\Om$ and for any $E\subset \Om$, we have
\begin{equation}\label{sec-sol-gm5}
\int_\Om (v_k+\theta v)^{-q}(v_k-v)~dx \leq C \|\delta^{\frac{(1-q)s}{1+q}}(x)\|_{L^2(E)} \|v_k-v\|, \; \text{using Hardy's inequality}.
\end{equation}
Since $q(2s-1)<(2s+1)$, for any $\e>0$, there exists a $\rho>0$ such that $\|\delta^{\frac{(1-q)s}{1+q}}(x)\|_{L^2(E)}<\e$ whenever $\mc L(E)<\rho$. Hence from \eqref{sec-sol-gm5} and Vitali's convergence theorem we obtain
\[ \int_\Om (v_k+\theta v)^{-q}(v_k-v)~dx \to 0 \;\text{as}\; k \to \infty\]
that is
\[\int_\Om |v_k|^{1-q}~dx \to \int_\Om |v|^{1-q}~dx \;\text{as}\; k \to \infty.\]
Now the rest of the proof follows exactly as the proof of Lemma $2.6$ of \cite{haitao}. {\hfill {$\square$}\goodbreak \medskip}
 \end{proof}

\noi We define
 \begin{equation*}
 {S_s} = \inf_{u \in X_0\setminus \{0\}} \displaystyle \frac{\int_Q \displaystyle\frac{|u(x)-u(y)|^2}{|x-y|^{n+2s}}{\,\mathrm{d}x\mathrm{d}y}}{\left(\int_{\Om}|u|^{2^*_s}\,\mathrm{d}x\right)^{2/2^*_s}}
 \end{equation*}
as the best constant for the embedding $X_0 \hookrightarrow L^{2^*_s}(\Om)$. Consider the family of minimizers $\{U_{\epsilon}\}$ of $S_s$ (refer \cite{sv}) defined as
\[ U_{\epsilon}(x) = \epsilon^{-\frac{(n-2s)}{2}}\; u^*\left(\frac{x}{\epsilon}\right),\; x \in \mb R^n \]
where $u^*(x) = \bar{u}\left(\frac{x}{S_s^{\frac{1}{2s}}}\right),\; \bar{u}(x) = \frac{\tilde{u}(x)}{\vert u \vert_{2^*_s}}$ and $\tilde{u}(x)= \alpha(\beta^2 + |x|^2)^{-\frac{n-2s}{2}}$ with $\alpha \in \mb R \setminus \{0\}$ and $ \beta >0$ are fixed constants. Then for each $\epsilon > 0$, $U_\epsilon$ satisfies
\[ (-\De)^su = |u|^{2^*_s-2}u \; \;\text{in} \; \mb R^n. \]
 Let $\nu >0$ {be} such that $B_{4\nu} \subset \Om$ and let $\zeta \in C^{\infty}_c(\mb R^n)$ be such that $0 \leq \zeta \leq 1$ in $\mb R^n$, $\zeta \equiv 0$ in {$\R^n\backslash B_{2\nu}$} and $\zeta \equiv 1$ in $B_\nu$. For each $\epsilon > 0$ and $x \in \mb R^n$, we define
$\Phi_\epsilon(x) :=  \zeta(x) U_\epsilon(x)$. From Lemma $4.12$ of \cite{TJS-ANA}, we have the following.
\begin{Lemma}\label{minimizer-gm}
$\sup \{{I_\la(u + t\Phi_\epsilon)}: t\geq 0\} <{ I_\la(u)} + {\frac{s}{n\la^{\frac{n-2s}{2s}}}}(S_s)^{\frac{n}{2s}}$, for {any} sufficiently small $\epsilon >0$.
\end{Lemma}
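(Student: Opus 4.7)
My plan is to adapt the classical Brezis--Nirenberg mountain pass computation to the present fractional and singular setting. Define $g_\epsilon: [0,\infty) \to \mb R$ by $g_\epsilon(t) := I_\la(u + t\Phi_\epsilon)$. Since $u \in C_{\phi_q}^+(\Om)$ and $\Phi_\epsilon \geq 0$, the perturbation $u+t\Phi_\epsilon$ remains bounded below by a positive multiple of $\phi_q$, so by Proposition \ref{gm-prop1} the function $g_\epsilon$ is differentiable on $[0,\infty)$. A direct check gives $g_\epsilon(0) = I_\la(u)$ and $g_\epsilon(t) \to -\infty$ as $t \to \infty$ (because $\|\Phi_\epsilon\|_{L^{2^*_s}(\Om)}$ is bounded away from $0$ uniformly for small $\epsilon$), so the supremum is attained at some $t_\epsilon > 0$ with $g_\epsilon'(t_\epsilon) = 0$. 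Using this critical-point equation together with the fact that $u$ solves $(P_\la)$ (tested against $\Phi_\epsilon$), I would establish bounds $0 < t_1 \leq t_\epsilon \leq t_2$ that are independent of small $\epsilon$.

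Next I would expand $g_\epsilon(t_\epsilon)$ as
\[g_\epsilon(t_\epsilon) = I_\la(u) + \frac{C^n_s t_\epsilon^2}{2}\|\Phi_\epsilon\|^2 + C^n_s t_\epsilon\ld u, \Phi_\epsilon\rd - \mc S(t_\epsilon) - \frac{\la}{2^*_s}\int_\Om\bigl[(u+t_\epsilon\Phi_\epsilon)^{2^*_s} - u^{2^*_s}\bigr]dx,\]
where $\mc S(t_\epsilon)$ gathers the singular-term difference. Testing the equation for $u$ against $t_\epsilon\Phi_\epsilon$ gives $C^n_s t_\epsilon\ld u,\Phi_\epsilon\rd = t_\epsilon \int_\Om u^{-q}\Phi_\epsilon + \la t_\epsilon\int_\Om u^{2^*_s-1}\Phi_\epsilon$, which cancels the first-order parts both of $\mc S(t_\epsilon)$ (Taylor-expanded in $t_\epsilon\Phi_\epsilon$ using that $u \geq c\phi_q$ on the support of $\Phi_\epsilon$, so no singularity is felt there) and of the critical difference. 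The leading quadratic-versus-critical balance then reduces to the algebraic maximum
\[\max_{t\geq 0}\Bigl(\tfrac{C^n_s t^2}{2}\|\Phi_\epsilon\|^2 - \tfrac{\la t^{2^*_s}}{2^*_s}\|\Phi_\epsilon\|_{L^{2^*_s}}^{2^*_s}\Bigr) = \frac{s}{n\,\la^{(n-2s)/(2s)}}\left(\frac{C^n_s\|\Phi_\epsilon\|^2}{\|\Phi_\epsilon\|_{L^{2^*_s}}^{2}}\right)^{\!n/(2s)},\]
to which is added a strictly negative cross term of the form $-\la t_\epsilon^{2^*_s-1}\int_\Om u\,\Phi_\epsilon^{2^*_s-1}dx$, produced by the sharp pointwise inequality $(a+b)^{2^*_s} \geq a^{2^*_s} + b^{2^*_s} + 2^*_s\, ab^{2^*_s-1} + 2^*_s\, a^{2^*_s-1}b$ for $a,b \geq 0$.

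The standard asymptotics for the truncated minimizer family $\{\Phi_\epsilon\}$ (as recalled in \cite{TJS-ANA}) give $C^n_s\|\Phi_\epsilon\|^2/\|\Phi_\epsilon\|_{L^{2^*_s}}^{2} \leq S_s + O(\epsilon^{n-2s})$, so after maximization the leading contribution is $I_\la(u) + \frac{s}{n\,\la^{(n-2s)/(2s)}} S_s^{n/(2s)} + O(\epsilon^{n-2s})$. The main obstacle, and the heart of the proof, is then to show that the negative cross term dominates every positive remainder. By concentration of $\Phi_\epsilon$ at the origin and positivity $u > 0$ near it, a standard change of variables yields $\int_\Om u\,\Phi_\epsilon^{2^*_s-1}dx \gtrsim \epsilon^{(n-2s)/2}$, which strictly beats $\epsilon^{n-2s}$ for all $n > 2s$. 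Simultaneously, one must verify that the second-order correction coming from $\mc S(t_\epsilon)$ (which involves $\int_\Om u^{-q-1}\Phi_\epsilon^2\,dx$, finite because $u$ is bounded away from $0$ on the support of $\Phi_\epsilon$) and the remaining subcritical interactions in the expansion of $(u+t_\epsilon\Phi_\epsilon)^{2^*_s}$ are all absorbed by this cross-term gain. Combining these estimates produces the strict inequality claimed for all sufficiently small $\epsilon > 0$.
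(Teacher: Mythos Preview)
Your overall strategy---expand $I_\la(u+t\Phi_\epsilon)$, cancel first-order terms via the equation for $u$, isolate the quadratic--critical balance, and beat the $O(\epsilon^{n-2s})$ remainder with a negative cross term---is exactly the Brezis--Nirenberg scheme that the paper invokes by citing Lemma~4.12 of \cite{TJS-ANA} (the paper gives no independent proof). So the route is right, but there is a concrete gap in your execution.

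The ``sharp pointwise inequality'' you rely on,
\[
(a+b)^{2^*_s}\ \geq\ a^{2^*_s}+b^{2^*_s}+2^*_s\,a^{2^*_s-1}b+2^*_s\,ab^{2^*_s-1}\quad(a,b\geq 0),
\]
is \emph{false} when $2^*_s<3$, that is, for every $n>6s$. Indeed, setting $h(b)=(a+b)^p-a^p-b^p-pa^{p-1}b-pab^{p-1}$ one has $h(0)=h'(0)=0$ and $h''(b)=p(p-1)\bigl[(a+b)^{p-2}-b^{p-2}-(p-2)ab^{p-3}\bigr]$; for $2<p<3$ the map $x\mapsto x^{p-2}$ is concave, so the bracket is nonpositive and $h\leq 0$. (At $p=3$ there is equality; for $p\geq 3$ the inequality does hold.) Thus for $n>6s$ your mechanism for producing the strictly negative term $-\la t_\epsilon^{\,2^*_s-1}\!\int_\Om u\,\Phi_\epsilon^{2^*_s-1}\,dx$ collapses. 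Your absorption claim for the singular second-order remainder has the same dimensional obstruction: that remainder is $O\!\bigl(\int_\Om\Phi_\epsilon^2\bigr)\sim\epsilon^{2s}$ when $n>4s$, whereas the cross-term gain is of order $\epsilon^{(n-2s)/2}$, and $(n-2s)/2\geq 2s$ precisely for $n\geq 6s$.

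The repair, as carried out in \cite{TJS-ANA}, is a dimension split. For $n\leq 6s$ your computation goes through essentially as written. For $n>6s$ one drops the singular term using only $-\mc S(t)\leq 0$ (no Taylor expansion), bounds the linear cross term $C^n_s t_\epsilon\langle u,\Phi_\epsilon\rangle$ directly by $O(\epsilon^{(n-2s)/2})$, and replaces your four-term inequality by the valid three-term bound $(a+b)^p\geq a^p+b^p+pab^{p-1}$ together with a more careful estimate of $\int_\Om u\,\Phi_\epsilon^{2^*_s-1}$ against the competing positive terms; this comparison of constants (or an equivalent region-splitting argument where $\Phi_\epsilon$ is large) is precisely what is missing from your sketch in the high-dimensional range.
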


Now we prove the existence of second solution if $(MP)$ holds.
\begin{Lemma}\label{sec-sol-MP}
Let $(MP)$ holds then there exists a $v \in X_0$, distinct from $w$, which solves $(P_\la)$ weakly.
\end{Lemma}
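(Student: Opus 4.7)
The plan is to apply a constrained mountain pass scheme on the closed convex set $T=\{u\in X_0:\; u\geq w\text{ a.e. in }\Om\}$. Under $(MP)$, we already have the geometry around the local minimizer $w$: there exists $\sigma_1\in (0,\sigma_0)$ with
\[\inf\{I_\la(u)\colon u\in T,\ \|u-w\|=\sigma_1\} > I_\la(w).\]
First, I will produce the ``far point''. Because $\Phi_\epsilon\geq 0$ and $w\geq 0$, the ray $\{w+t\Phi_\epsilon\}_{t\geq 0}$ lies in $T$. Since $2^*_s>2$, $I_\la(w+t\Phi_\epsilon)\to -\infty$ as $t\to+\infty$, so for $t_0>0$ large enough we have $u_1:=w+t_0\Phi_\epsilon\in T$, $\|u_1-w\|>\sigma_1$ and $I_\la(u_1)<I_\la(w)$.

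Next, I would define
\[\Gamma=\{\gamma\in C([0,1],T)\colon \gamma(0)=w,\ \gamma(1)=u_1\},\qquad c=\inf_{\gamma\in\Gamma}\max_{t\in[0,1]} I_\la(\gamma(t)).\]
Using the straight line $\gamma_0(t)=w+t\,t_0\Phi_\epsilon\in T$ and Lemma \ref{minimizer-gm} for $\epsilon>0$ small enough, I get the critical level estimate
\[I_\la(w) < c \leq \max_{t\in[0,1]} I_\la(w+t\, t_0\Phi_\epsilon) < I_\la(w)+\frac{s}{n\la^{\frac{n-2s}{2s}}}S_s^{\frac{n}{2s}}.\]

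Then, I would run Ekeland's variational principle on the complete metric space $T$ (exactly as in the proof of Lemma \ref{sec-sol-ZA}) along a minimizing sequence of paths. This yields a sequence $\{v_k\}\subset T$ with $I_\la(v_k)\to c$ and
\[I_\la(v_k)\leq I_\la(z)+\tfrac{1}{k}\|z-v_k\|\quad\text{for every } z\in T.\]
Testing along admissible perturbations $z=v_k+\epsilon\phi+\phi_{k,\epsilon}$, with $\phi_{k,\epsilon}=(v_k+\epsilon\phi-w)^-$, and letting first $k\to\infty$ and then $\epsilon\to 0^+$ exactly as in Lemma \ref{sec-sol-ZA} shows that any weak limit $v$ of $\{v_k\}$ in $X_0$ satisfies the weak formulation of $(P_\la)$. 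Boundedness of $\{v_k\}$ follows from a standard combination of $I_\la(v_k)\leq c+1$ with the almost-criticality inequality tested against $v_k-w\in T-w$, using $q(2s-1)<(2s+1)$ to absorb the singular term via the Hardy-type estimate \eqref{gm-11}.

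The delicate step, and the main obstacle, is strong convergence $v_k\to v$ in $X_0$ given the critical exponent. I would apply the concentration-compactness / Brezis--Lieb decomposition on the fractional Sobolev norm and on the $L^{2^*_s}$-norm, together with the convergence of the singular term $\int_\Om v_k^{1-q}\,dx\to\int_\Om v^{1-q}\,dx$ obtained through Vitali's theorem exactly as in \eqref{sec-sol-gm5}. If concentration occurred, the defect of compactness would produce an energy jump of size at least $\frac{s}{n\la^{(n-2s)/(2s)}}S_s^{n/(2s)}$ at $v$, forcing $c\geq I_\la(v)+\frac{s}{n\la^{(n-2s)/(2s)}}S_s^{n/(2s)}\geq I_\la(w)+\frac{s}{n\la^{(n-2s)/(2s)}}S_s^{n/(2s)}$, which contradicts the upper bound on $c$ established above. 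Hence $v_k\to v$ strongly, $v\in T$ is a weak solution, and because $I_\la(v)=c>I_\la(w)$ we conclude $v\neq w$, completing the proof.
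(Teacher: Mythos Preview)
Your overall architecture (mountain-pass geometry on $T$, Ekeland to produce an almost-critical sequence, pass to the limit as in Lemma~\ref{sec-sol-ZA}, then exclude concentration via the level estimate) coincides with the paper's. However, there is a genuine gap in the step where you invoke Ekeland.

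You claim that Ekeland on $T$ ``along a minimizing sequence of paths'' yields $\{v_k\}\subset T$ with $I_\la(v_k)\to c$ and
\[
I_\la(v_k)\leq I_\la(z)+\tfrac{1}{k}\|z-v_k\|\qquad\text{for all } z\in T.
\]
This cannot hold: taking $z=w\in T$ gives $I_\la(v_k)\leq I_\la(w)+\tfrac{1}{k}\|w-v_k\|$, so (once $\{v_k\}$ is bounded) $c=\lim I_\la(v_k)\leq I_\la(w)$, contradicting $c>I_\la(w)$. The displayed inequality is the Ekeland output for an \emph{almost-minimizer} on $T$, which is precisely the (ZA) situation, not the mountain-pass one. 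In the (MP) regime one must apply Ekeland to the functional $\Psi(\eta)=\max_{t}I_\la(\eta(t))$ on the complete metric space $\Gamma$ of admissible paths, obtaining paths $\eta_k$ with $\Psi(\eta_k)\leq\Psi(\eta)+\tfrac{1}{k}d(\eta,\eta_k)$; the nontrivial work is then to locate, for each $k$, a point $t_k$ in the maximum set $\Lambda_k=\{t:\,I_\la(\eta_k(t))=\Psi(\eta_k)\}$ such that $v_k:=\eta_k(t_k)$ satisfies the one-sided variational inequality
\[
\langle I_\la'(v_k),\,z-v_k\rangle \geq -\tfrac{C}{k}\bigl(1+\|z\|\bigr)\qquad\text{for all } z\in T.
\]
The paper carries this out by a contradiction argument: if every $t\in\Lambda_k$ admitted a descent direction in $T$, one patches these local directions via a partition of unity into a continuous path deformation, contradicting the Ekeland inequality for $\Psi$. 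This step is the technical heart of the proof and is entirely missing from your outline. Once the correct almost-criticality is in hand, your remaining steps (boundedness via $z=2v_k\in T$, weak limit solves $(P_\la)$ as in Lemma~\ref{sec-sol-ZA}, and exclusion of concentration by the strict level bound from Lemma~\ref{minimizer-gm}) are indeed the ones the paper uses.
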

\begin{proof}
From Lemma \ref{minimizer-gm}, it follows that there exists $\e>0$ and $R_0\geq 1$ such that
\begin{enumerate}
\item[(i)] $I_\la(w+RU_\e) < I_\la(w)$ for $\e \in (0,\e_0)$ and $R\geq R_0$.
\item[(ii)] $I_\la(w+tR_0 U_\e) < I_\la(w) + \displaystyle \frac{sS_s^{\frac{n}{2s}}}{n\la^{\frac{n-2s}{2s}}}$ for $\e \in (0,\e_0)$ and $t \in [0,1]$.
\end{enumerate}
We define the complete metric space
\[\Gamma := \{\eta \in C([0,1],T):\; \eta(0)=w,\; \|\eta(1)-w\|>\sigma_1,\; I_\la(\eta(1))< I_\la(w) \}\]
with metric defined as $d(\eta^\prime,\eta)=\max\limits_{t\in[0,1]}\{\|\eta^\prime(t)-\eta(t)\|\}$ for all $\eta,\eta^\prime \in \Gamma$. From $(i)$ above, we get that $\eta(t)= w+tR_0U_\e \in \Gamma$ for large enough $R_0>0$. This gives that $\Gamma \neq \emptyset$. Let $\gamma_0 = \inf\limits_{\eta\in \Gamma}\max\limits_{t \in [0,1]}I_\la(\eta(t))$ then by virtue of (ii) above and condition $(MP)$, we get
\[I_\la(w)< \gamma_0 \leq I_\la(w)+ \frac{sS_s^{\frac{n}{2s}}}{n}.\]
Now let $\Psi(\eta)= \max\limits_{t \in [0,1]}I_\la(\eta(t))$ for $\eta \in \Gamma$. Then using Ekeland's variational principle, we get a sequence $\{\eta_k\}\subset \Gamma$ such that
\begin{equation}\label{gm-12}
\Psi(\eta_k)<\gamma_0+ \frac{1}{k} \;\text{and}\;  \Psi(\eta_k){<} \Psi(\eta)+ \frac{1}{k}\|\Psi(\eta)-\eta(\eta_k)\|_\Gamma,\;\forall \eta\in \Gamma.
\end{equation}
We define
\[\Lambda_k = \{t \in [0,1]:\; I_\la(\eta_k(t)) = \max_{x\in [0,1]}I_\la(\eta_k(x))\}\].
\textbf{Claim:} There exists a $t_k \in \Lambda_k$  such that if $v_k = \eta_k(t_k)$ and $z \in T$ then
\[\int_{\mb R^n}(-\De)^sv_k(z-v_k) -\int_\Om (v_k^{-q}+\la v_k^{2^*_s-1})(z-v_k)~dx \geq -\frac{1}{k}\max\{1, \|z-v_k\|\}.\]
We prove it by contradiction, so assume that for every $t \in \Lambda_k$ there exists a $z_t\in T$ such that
\begin{equation}\label{gm-13}
\begin{split}
\int_{\mb R^n}(-\De)^s\eta_k(t)&\left( \frac{z_t-\eta_k(t)}{\max\{1,\|z_t-\eta_k(t)\|\}}\right)~dx\\
& - \int_{\Om}((\eta_k(t))^{-q}+\la (\eta_k(t))^{2^*_s-1})\left( \frac{z_t-\eta_k(t)}{\max\{1,\|z_t-\eta_k(t)\|\}}\right)~dx< -\frac{1}{k}.
\end{split}
\end{equation}
{Since $I_\la$ is locally Lipschitz} in $T$, $z_t$ can be chosen to be locally constant on $\Lambda_t$. Therefore for each $t \in \Lambda_k$ there exists a neighborhood $N_t$ of $t$ in $(0,1)$ such that for each $r \in N_t \cap \Gamma_k$, \eqref{gm-13} holds that is
\begin{equation}\label{gm-14}
\begin{split}
\int_{\mb R^n}(-\De)^s\eta_k(r)&\left( \frac{z_t-\eta_k(r)}{\max\{1,\|z_t-\eta_k(r)\|\}}\right)~dx\\
& - \int_{\Om}((\eta_k(r))^{-q}+\la (\eta_k(r))^{2^*_s-1})\left( \frac{z_t-\eta_k(r)}{\max\{1,\|z_t-\eta_k(r)\|\}}\right)~dx< -\frac{1}{k}.
\end{split}
\end{equation}
It is possible to choose a finite set $\{r_1,r_2,\ldots,r_m\} \subset \Lambda_k$ such that  $\Lambda_k \subset \cup_{i=1}^{m}J_{r_i}$. For notational convenience, we set $z_i=z_{r_i}$ and denote $\{\kappa_1,\kappa_2,\ldots,\kappa_m\}$ as the partition of unity associated with covering $\{J_{r_1}, J_{r_2}, \ldots,J_{r_m}\}$ of $\Lambda_k$. Now if we define $z(r)= \sum_{i=1}^{m}\kappa_i(r)z_i$ for ${r} \in [0,1]$  then $z(r) \in T$ for each ${r} \in [0,1]$. Therefore from \eqref{gm-14} we deduce that for all $r \in [0,1]$
\begin{equation}\label{gm-15}
\begin{split}
\int_{\mb R^n}(-\De)^s\eta_k(r)&\left( \frac{z(r)-\eta_k(r)}{\max\{1,\|z(r)-\eta_k(r)\|\}}\right)~dx\\
& - \int_{\Om}((\eta_k(r))^{-q}+\la (\eta_k(r))^{2^*_s-1})\left( \frac{z(r)-\eta_k(r)}{\max\{1,\|z(r)-\eta_k(r)\|\}}\right)~dx< -\frac{1}{k}.
\end{split}
\end{equation}
Let $h:[0,1] \to [0,1]$ be a continuous function such that $h(t)=1$ in a neighborhood of $\Lambda_k$ and $h(0)=h(1)=0$. Also we set $\mu_k(t)= \max\{1,\|z(t)-\eta_k(t)\|\}$ and
\[\eta(t)= \eta_k(t) +\frac{h(t)\epsilon}{\mu_k(t)}({z}(t)-\eta_k(t)).\]
Then for $\e\in (0,1)$, $\eta(t) \in T$ for all $t \in [0,1]$. Hence \eqref{gm-12} gives us that
\begin{equation}\label{gm-16}
\max_{t\in[0,1]}I_\la(\eta_k(t)) \leq \max_{t\in[0,1]}I_\la(\eta(t)) + \frac{\epsilon}{k}\max_{t\in[0,1]}\left(h(t)\frac{\|z(t)-\eta_k(t)\|}{\mu_k(t)}\right).
\end{equation}
If $t_{k,\e}\in[0,1]$ denotes the value such that $I_\la(\eta(t_{k,\e}))= \max_{t\in[0,1]}I_\la(\eta(t)) $ then we can assume that $t_{k,\e_j}\to t_k$ for some $t_k \in [0,1]$, where $\e_j$ is a sequence such that $\e_j \to 0$. Using the continuity of $\eta$, we deduce that
\[\eta({t_{k,\e_j}}) \to \eta_k(t_k)\;\text{as}\; \e_j\to 0.\]
Hence from \eqref{gm-16} we obtain that
$\max_{t\in[0,1]}I_\la(\eta_k(t)) \leq \max_{t\in[0,1]}I_\la(\eta_k(t_k))$ which implies $I_\la(\eta_k(t_k))= \max\limits_{t \in [0,1]} I_\la(\eta_k({t}))$. So $t_k \in \Gamma_k$ and $h(t_{k,\e_j})=1$ for $j>0$ large enough, by definition. If we set $v_k = \eta_k(t_k)$, $v_{k, j}= \eta_k(t_{k,\e_j})$ and $\mu_{k,j}= \max\{1,\|z(t_{k,\e_j})-v_{k,j}\|\}$ then for large enough $j$ we obtain
\begin{equation}\label{gm-17}
I_\la(v_{k,j}) \leq I_{\la}(v_k)\leq I_\la\left( v_{k,j}+ \frac{\e_j}{\mu_{k,j}}(z(t_{k,\e_j})- v_{k,j})\right) + \frac{\e_j}{k}.
\end{equation}
It is easy ro see that $\mu_{k,j} \to \theta_k:=  \max\{1,\|z(t_{k})-v_{k}\|\}$ and $\|v_k - v_{k,j}\| \to 0$ as $j \to \infty$. Let $p_j = v_{k,j}-v_k$ {and}
\[k_j = p_j + \e_j \left( \frac{z(t_{k,j})-v_{k,j}}{\mu_{k,j}} - \frac{z(t_k)-v_k}{\theta_k}\right) = p_j +o(1).\]
Then from \eqref{gm-17}, we obtain
\[\frac{1}{\e_j}\left(I_\la\left(v_k+\e_j\left(\frac{z(t_k)-v_k}{\theta_k} \right)+k_j \right)+ I_\la(v_k+p_j) \right) \geq -\frac{1}{k}\; \text{as}\; j \to \infty.\]
But since $v_k+\e_j\left(\frac{z(t_k)-v_k}{\theta_k} \right) \geq w$ using the fact that $z(t_k) \in T$, from Proposition \ref{gm-prop1} and the above inequality we get
\begin{equation*}\label{gm-15}
\begin{split}
\int_{\mb R^n}(-\De)^sv_k\left( \frac{z(t_k)-v_k}{\theta_k}\right)~dx- \int_{\Om}(v_k^{-q}+\la v_k^{2^*_s-1})\left( \frac{z(t_k)-v_k}{\theta_k}\right)~dx\geq -\frac{1}{k}.
\end{split}
\end{equation*}
This is a contradiction to {\eqref{gm-13}}. Thus, the claim holds. So there exists a sequence $\{v_k\}$ satisfying
\begin{equation}\label{gm-18}
\left\{
\begin{split}
&\int_{\mb R^n}(-\De)^sv_k(z-v_k) -\int_\Om (v_k^{-q}+\la v_k^{2^*_s-1})(z-v_k)~dx \geq -\frac{c}{k}(1+\|z\|)\; \text{for all}\; z \in T\\
&I_\la(v_k) \to \gamma_0\;\text{as}\; k \to \infty
\end{split}
\right.
\end{equation}
where $c>0$ is some constant. Setting $z = 2v_k$ in \eqref{gm-12} and using \eqref{gm-18} we get
\[\gamma_0+o(1) \geq \frac{ sC^n_s}{n}\|v_k\|^2 -\frac{2^*_s-1+q}{2^*_s(1-q)}\int_\Om |v_k|^{1-q}~dx- \frac{c}{2^*_s k}(1+2\|v_k\|).  \]
Now this implies that $\{v_k\}$ must be bounded in $X_0$, thus up to a subsequence, $v_k \rightharpoonup v$ weakly in $X_0$ as $k \to \infty$. Using similar ideas as in $(ZA)$ case, it can be shown that $v$ is a weak solution of $(P_\la)$.
Then the rest of the proof follows exactly same as Lemma $3.3$ of \cite{DPSS-cvee} or Lemma $2.7$ of \cite{haitao}. {\hfill {$\square$}\goodbreak \medskip}
\end{proof}

\noi \textbf{Proof of Theorem \ref{maintheorem-gm}:}
The proof follows from Lemma \ref{sec-sol-ZA}, Lemma \ref{sec-sol-MP} and Proposition \ref{firstsol-gm} along with Proposition \ref{gm-prop2}. {\hfill {$\square$}\goodbreak \medskip}

\noi \textbf{Proof of Theorem \ref{gm-reg2}:}
The proof follows directly from Proposition \ref{reg-gm1} (Appendix) and Theorem $1.2$ of \cite{AJS}. {To see that the regularity result falls into the scope of Theorem $1.2$ of \cite{AJS}, note that $\underline{u_\lambda}\leq u\leq z_\lambda$ (refer to Appendix). Moreover, from the fact that $\underline{u_\lambda}$ and $z_\lambda$ are in $C^+_{\phi_q}(\Omega)$ together with local regularity results from \cite{Silvestre-CPAM}, we infer that $u\in C^+_{\phi_q}(\Omega)$.}

{\hfill {$\square$}\goodbreak \medskip}

\section{Appendix}
In this section, we prove that any weak solution of $(P_\la)$ is in $L^\infty(\Om)$. We prove it in the spirit of Proposition $2.2$ of \cite{serv-Linfty}.
First we let $u\in X_0$ denotes any weak solution of $(P_\la)$ and we know that $\underline{u_\la}\in X_0 \cap C_{\phi_q}^+(\Om)$ (defined in Lemma \ref{gm-lem1}) forms a subsolution of $(P_\la)$ satisfying
$(-\De)^s\uline{u_\la} = \uline{u_\la}^{-q}$ in $\Om$.\\
\textbf{Claim :} $\uline{u_\la}\leq u$ a.e. in $\Om$.\\
Suppose it is not true. Then it is easy to see that for any $v\in X_0$ it holds
\[(v(x)-v(y))(v^+(x)-v^-(y)) \geq |v^+(x)-v^+(y)|^2,\; \text{for any}\; x,y\in R^n.\]
Therefore using $(\uline{u_\la}-u)^+$ as the test function in
\[(-\De)^s(\uline{u_\la}-u) \leq \uline{u_\la}^{-q}- u^{-q}\; \text{in}\; \Om\]
we get
\begin{align*}
0&\leq C^n_s\int_{Q}\frac{|(\uline{u_\la}-u)^+(x)- (\uline{u_\la}-u)^+(y)|^2}{|x-y|^{n+2s}}~dxdy\\
 &\leq C^n_s\int_{Q}\frac{((\uline{u_\la}-u)^+(x)- (\uline{u_\la}-u)^+(y))((\uline{u_\la}-u)(x)- (\uline{u_\la}-u)(y))}{|x-y|^{n+2s}}~dxdy\\
&\leq \int_\Om (\uline{u_\la}^{-q}- u^{-q})(\uline{u_\la}-u)^+~dx\leq 0.
\end{align*}
Hence it must be that meas$\{x\in \Om:\; \uline{u_\la}(x)\geq u(x)\}=0$ which establishes our claim. {Also if  $z_\lambda$ is defined as the unique solution (refer Theorem $1.1$ of \cite{AJS}) to
\begin{equation*}
(-\De)^s z_\lambda =  {z_\lambda}^{-q} + \la c, \quad u>0 \; \text{in}\;
\Om,\quad u = 0 \; \mbox{in}\; \mb R^n \setminus\Om,
\end{equation*}
with $c=\|u\|_{\infty}^{2^*_s-1}$ then similarly we can prove that $u\leq z_\la$.}

\begin{Proposition}\label{reg-gm1}
If $u\in X_0 $ is any  weak solution of $(P_\la)$ for $\la \in (0,\Lambda]$ then $u \in L^\infty(\Om)$.
\end{Proposition}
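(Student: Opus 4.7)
The plan is to follow the Moser iteration scheme in the nonlocal framework as in Proposition 2.2 of \cite{serv-Linfty}, adapted to deal with the singular and critical nonlinearities simultaneously. Since $u\in X_0\hookrightarrow L^{2^*_s}(\Om)$ is already known, the strategy is to bootstrap: deduce $u\in L^p(\Om)$ for increasingly large $p$ with controlled constants, and finally $u\in L^\infty(\Om)$.

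First I would exploit the comparison $\uline{u_\la}\le u$ a.e.\ in $\Om$ proved just above the statement, in order to replace the singular term $u^{-q}$ by the fixed majorant $\uline{u_\la}^{-q}$. Since $\uline{u_\la}\in C_{\phi_q}^+(\Om)$ has the boundary behaviour of $\phi_q$, the integrand $\uline{u_\la}^{1-q}$ lies in $L^1(\Om)$ precisely because of the constraint $q(2s-1)<2s+1$ (this is the same computation as in \eqref{gm-11}, via the Hardy inequality). This turns the singular contribution into an integrable forcing that behaves in the iteration like a subcritical term.

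Next, for $\beta\ge 1$ and $T>0$, setting $u_T:=\min\{u,T\}$, I would test the weak formulation against $\phi:=u\, u_T^{2(\beta-1)}\in X_0\cap L^\infty(\Om)$. The standard fractional algebraic inequality
\[
(a-b)\bigl(a\,a_T^{2\beta-2}-b\,b_T^{2\beta-2}\bigr)\ \ge\ \frac{C}{\beta}\bigl(a\,a_T^{\beta-1}-b\,b_T^{\beta-1}\bigr)^2\qquad(a,b\ge 0)
\]
bounds the left-hand side from below by $\frac{C}{\beta}[\,u\,u_T^{\beta-1}\,]_X^2$, which by the fractional Sobolev embedding dominates $\frac{C S_s}{\beta}\|u\,u_T^{\beta-1}\|_{L^{2^*_s}}^2$. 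On the right-hand side the critical term is split by H\"older as
\[
\int_\Om u^{2^*_s-2}(u\,u_T^{\beta-1})^2\,dx\ \le\ \Bigl(\int_{\{u\ge K\}}u^{2^*_s}\,dx\Bigr)^{\!2s/n}\!\|u\,u_T^{\beta-1}\|_{L^{2^*_s}}^2\ +\ K^{2^*_s-2}\|u\,u_T^{\beta-1}\|_{L^2}^2,
\]
and $K$ is chosen large enough (depending only on $\|u\|_{L^{2^*_s}}$) that the first term can be absorbed into the left-hand side. The singular term reduces to $\int_\Om u^{1-q}\,u_T^{2\beta-2}\,dx$, which is controlled by $T^{2\beta-2}\|\uline{u_\la}^{1-q}\|_{L^1(\Om)}$ using $u\ge \uline{u_\la}$.

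Letting $T\to\infty$ via Fatou/monotone convergence yields a Moser-type recursion of the form $\|u\|_{L^{\beta\cdot 2^*_s}}\le C(\beta)\bigl(\|u\|_{L^{2\beta}}+\text{lower order}\bigr)$. Iterating with $\beta_{k+1}=(2^*_s/2)\beta_k$ gives $u\in L^p(\Om)$ for every finite $p$; a final Stampacchia/De~Giorgi step based on test functions $(u-L)^+$ with $L$ large, valid once $u$ lies in a sufficiently high $L^p$, then upgrades this to $u\in L^\infty(\Om)$. The main technical obstacle will be the joint control of the singular and critical contributions along the iteration: one must keep track of the $\beta$-dependence of the constants while ensuring that the singular forcing does not blow up as $\beta$ grows, and that the absorption of the high-concentration part of the critical term is carried out uniformly. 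This is where the hypothesis $q(2s-1)<2s+1$ plays its decisive role, since it is precisely the condition that makes $\uline{u_\la}^{1-q}\in L^1(\Om)$ and closes the iteration.
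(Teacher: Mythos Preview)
Your overall strategy is sound and will work, but there is one concrete mis-step. You estimate the singular contribution by
\[
\int_\Om u^{1-q}\,u_T^{2(\beta-1)}\,dx\ \le\ T^{2(\beta-1)}\,\|\uline{u_\la}^{\,1-q}\|_{L^1(\Om)},
\]
and then say ``letting $T\to\infty$ via Fatou/monotone convergence yields a Moser-type recursion''. That bound explodes with $T$, so nothing survives the passage $T\to\infty$; as written, the recursion does not close. The fix is to split at level~$1$: on $\{u<1\}$ (and $T\ge 1$) one has $u_T^{2(\beta-1)}\le 1$ and $u^{1-q}\le \uline{u_\la}^{\,1-q}$, giving the $T$-free bound $\|\uline{u_\la}^{\,1-q}\|_{L^1}$; on $\{u\ge 1\}$ one has $u^{-q}\le 1$, hence $u^{1-q}u_T^{2(\beta-1)}\le u^2 u_T^{2(\beta-1)}$, which is exactly $\|u\,u_T^{\beta-1}\|_{L^2}^2$ and is absorbed with the lower-order term. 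With this correction the iteration goes through. Note also that your test function $\phi=u\,u_T^{2(\beta-1)}$ lies in $X_0$ but not in $L^\infty(\Om)$ (that is what you are proving); this does not matter, since the weak formulation only requires $\phi\in X_0$.

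The paper's proof is considerably shorter and bypasses the need to track the singular term through the iteration. It works with $v:=(u-1)^+$ and uses the convex chain rule $(-\De)^s v\le \chi_{\{u>1\}}(-\De)^s u$ (via smooth convex approximations of $t\mapsto(t-1)^+$). On $\{u>1\}$ one has $u^{-q}<1$ automatically, so
\[
(-\De)^s v\ \le\ \chi_{\{u>1\}}\bigl(u^{-q}+\la u^{2^*_s-1}\bigr)\ \le\ C\bigl(1+v^{2^*_s-1}\bigr),
\]
which is precisely the hypothesis of Proposition~2.2 in \cite{serv-Linfty}; that result is then quoted as a black box to conclude $v\in L^\infty(\Om)$, hence $u\in L^\infty(\Om)$. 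Thus the singularity is eliminated in one stroke rather than carried through the iteration, and the exponent constraint $q(2s-1)<2s+1$ plays no role in this step beyond making the notion of weak solution well posed. Your remark that this constraint is ``decisive'' for closing the iteration overstates its role: after the splitting above, the singular part contributes a single fixed constant, independent of~$\beta$.
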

\begin{proof}
Let $u\in X_0$ denotes a  weak solution of $(P_\la)$. Then by virtue of the above claim and Hardy's inequality, we know that $\int_\Om u^{-q}\phi~dx<\infty$ for any $\phi\in X_0$. We aim to show that $(u-1)^+$ belongs to $L^\infty(\Om)$ which will imply that $u \in L^\infty(\Om)$. If $f(t)=(t-1)^+$ for $t\in \mb R$ and $\psi(t) \in C^\infty(\mb R)$ be a convex and increasing function such that $\psi^\prime(t)\leq 1$ when $t \in[0,1]$ and $\psi^\prime(t)=1$ when $t\geq 1$ then we can define
\[\psi_\e(t) = \e \psi(t/\e)\]
so that $\psi_\e \to f$ uniformly as $\e\to 0$. Also since $\psi_\e$'s are smooth, by regularity results and the uniform convergence of $\psi_\e$ to $f$ we get that
\[(-\De)^s \psi_\e(u) \to (-\De)^s (u-1)^+ \;\text{as}\; \e \to 0.\]
Moreover because $\psi_\e$'s are convex and differentiable, we know that
\[(-\De)^s \psi_\e (u) \leq \psi^\prime_\e(u)(-\De)^s u \leq \chi_{\{u>1\}} (-\De)^s u\]
where $\chi_{\{u>1\}}$ denotes the characteristic function over the set $\{x\in \Om :\; u(x)>1\}$. Then passing on the limits $\e \to 0$ in above equation, we obtain
\[(-\De)^s(u-1)^+ \leq \chi_{\{u>1\}} (-\De)^s u \leq \chi_{\{u>1\}}(u^{-q}+ \la u^{2^*_s-1}) \leq C(1+ ((u-1)^+)^{2^*_s-1})\]
for some constant $C>0$. Therefore we use Proposition $2.2$ of \cite{serv-Linfty} to conclude that $(u-1)^+ \in L^\infty(\Om)$. This completes the proof.{\hfill {$\square$}\goodbreak \medskip}
\end{proof}


\begin{thebibliography}{21}
\linespread{0.1}

\bibitem{AJS} Adimurthi, J. Giacomoni and S. Santra, {\it Positive solutions to a fractional equation with singular nonlinearity}, J. Differential Equations, 265 (4) (2018), 1191-1226.

\bibitem{da}D. Applebaum, {\it L$\acute{e}$vy process-from probability to finance and quantum groups}, Notices Amer. Math. Soc.,
51 (2004) 1336-1347.

\bibitem{serv-Linfty} B. Barrios, E. Colorado, R. Servadei and F. Soria, {\it A critical fractional equation with concave�convex power nonlinearities}, Ann. I. H. Poincare, 32 (2015) 875-900.
\bibitem{peral}  B. Barrios, I. De Bonis,  M. Medina and  I. Peral, {\it  Semilinear problems for the fractional laplacian with a singular nonlinearity}, Open Math., 13 (2015) 390--407.

\bibitem{peral-hardy} B. Barrios, M. Medina and I. Peral, {\it Some remarks on the solvability of non-local elliptic problems with the Hardy potential}, Communications in Contemporary Mathematics, 16 (4) (2014), 1350046 (29 pages).


\bibitem{buccur} C. Bucur and E. Valdinoci, {\it An Introduction to the Fractional Laplacian}, Nonlocal Diffusion and Applications, Lecture Notes of the Unione Matematica Italiana, 20 (2016), Springer, Cham.

\bibitem{cai-chu} Z. Cai, C. Chu and C. Lei, {\it Existence of positive solutions for a
fractional elliptic problems with the
Hardy-Sobolev-Maz�ya potential and critical
nonlinearities}, Boundary Value Problems, (2017), DOI 10.1186/s13661-017-0912-8.

\bibitem{chen} W. Chen, {\it Fractional elliptic problems with two critical
sobolev-hardy exponents}, Electronic Journal of Differential Equations, 22 (2018), 1�12.

\bibitem{DPSS-cvee} R. Dhanya, S. Prashanth, Sweta Tiwari and K. Sreenadh, {\it Elliptic
Problems in with Critical and Singular Discontinuous Nonlinearities}, Complex Variables and Elliptic Equations, 61(12) (2016), 1656-1676.


\bibitem{FP} A. Fiscella and P. Pucci, {\it On certain nonlocal Hardy-Sobolev critical elliptic Dirichlet problems},
Adv. Differential Equations, 21 (5/6) (2016), 571-599.

\bibitem{TJS-ANA} J. Giacomoni, T. Mukherjee and K. Sreenadh, {\it Positive solutions of fractional elliptic
equation with critical and singular nonlinearity}, Advances in Nonlinear Analysis, 6(3) (2017), 327�354.

\bibitem{TJS-para} J. Giacomoni, T. Mukherjee and K. Sreenadh, {\it Existence and stabilization results for a singular parabolic equation involving the fractional Laplacian}, to appear in Discrete and Continuous Dynamical Systems - Series S.

\bibitem{haitao} Y. Haitao, {\it Multiplicity and asymptotic behavior of positive solutions for a singular semilinear elliptic problem}, J. Differential Equations, 189 (2003) 487--512.

\bibitem{BRS} G. Molica Bisci, V. Radulescu and R. Servadei, {\it Variational Methods for Nonlocal Fractional Problems (Encyclopedia of Mathematics and its Applications)}, Cambridge University Press, Cambridge, DOI:10.1017/CBO9781316282397.

\bibitem{TS-ejde} T. Mukherjee and K. Sreenadh, {\it Critical growth fractional elliptic problem with singular nonlinearities}, Electronic Journal of differential equations, 54 (2016) 1--23.

{\bibitem{Ros-oton-serra-JMPA} X. Ros-Oton and J. Serra, {\it The Dirichlet problem for the fractional Laplacian: Regularity up to the boundary}, J. Math. Pures Appl., 101 (2014), 275--302.}

\bibitem{sv} R. Servadei and E. Valdinoci, {\it The Brezis-Nirenberg result for the fractional laplacian}, Trans. Amer. Math. Soc., 367 (2015) 67--102.
{\bibitem{Silvestre-CPAM} L. Silvestre, {\it Regularity of the obstacle problem for a fractional power of the Laplace operator}, Comm. Pure Appl. Math., 60 (1) (2007), 67--112.}

 \end{thebibliography}
\end{document}